\numberwithin{equation}{section}
\theoremstyle{plain}
\newtheorem{theorem}[subsection]{Theorem}
\newtheorem{proposition}[subsection]{Proposition}
\newtheorem{lemma}[subsection]{Lemma}
\newtheorem{corollary}[subsection]{Corollary}
\theoremstyle{definition}
\newcommand{\Q}{\mathbb{Q}}
\newcommand{\Z}{\mathbb Z}
\newcommand{\C}{\mathbb C}
\newcommand{\R}{\mathbb{R}}
\newcommand{\lambdabold}{\text{\mathversion{bold}$\lambda$\mathversion{normal}}}
\newcommand{\mubold}{\text{\mathversion{bold}$\mu$\mathversion{normal}}}
\def\author@andify{%
  \nxandlist {\unskip ,\penalty-1 \space\ignorespaces}%
    {\unskip {} \@@and~}%
    {\unskip \penalty-2 \space \@@and~}%
}
\title{Sums of squares in real quadratic fields and Hilbert modular groups}
\author{Fernando Chamizo}
\author{Roberto J. Miatello}
\thanks{The first named author is partially supported by the MTM2017-83496-P grant of the
MICINN (Spain) and by ``Severo Ochoa Programme for Centres of Excellence in
R{\&}D'' (SEV-2015-0554)}
\subjclass[2010]{11F72, 11F41, 11N45}
\begin{document}

\begin{abstract}
We use the spectral theory of Hilbert-Maass forms for real quadratic fields 
to obtain the asymptotics of some sums involving the number of representations as a sum of two squares in the ring of integers. 
\end{abstract}

\maketitle

\section{Introduction}

The asymptotic study of the average of the arithmetical function $r(n)$ giving the number of representations  of~$n$ as a sum of two squares is the goal of the celebrated Gauss circle problem. It asks for the infimum of the exponents $\alpha$'s satisfying
\begin{equation}\label{circle}
 \sum_{n\le x}
 r(n)
 =
 \pi x
 +O(x^\alpha).
\end{equation}
The left-hand side counts the number of lattice points in a circle of radius $\sqrt{x}$ and $\pi x$ gives its area. Gauss used an approximation of this kind when studying the class number of quadratic forms \cite{gauss}. A simple geometric reasoning, already employed by Gauss, shows \eqref{circle} for $\alpha=1/2$. Sierpi\'nski \cite{sierpinski} proved the estimate for $\alpha=1/3$ with a rather complicated argument (see \cite{huxley_b} for a short modern elementary approach). On the other hand,  $\alpha=1/3$  can be obtained in a more direct way using the Euclidean spectral expansion (i.e., classical Fourier analysis) of radial functions on $\R^2$ \cite[VIII\S8]{Lang}.

A finer asymptotic property to be studied about $r(n)$ is its self-correlation. In \cite{iwaniec} it is proved that
\begin{equation}\label{corre}
 \sum_{n\le x}
 r(n)r(n+1)
 =
 8 x
 +O(x^{2/3})
\end{equation}
and there are similar formulas replacing $r(n+1)$ by $r(n+k)$ (see \cite{chamizo_c} for the uniformity). 
Although \eqref{circle} for $\alpha=1/3$  and \eqref{corre} seem unrelated, from the analytic point of view one can run both proofs along similar lines changing Euclidean spectral expansions based on Fourier series by hyperbolic spectral expansions based on Maass forms and Eisenstein series. This second situation is by far more involved and one has to bypass unsettled problems like the existence of exceptional eigenvalues or the $L^\infty$ bounds of the eigenfunctions, which become trivial in the Euclidean setting.  It is noteworthy to mention that the error term in \eqref{corre} remains unimproved while van der Corput method and other finer techniques of exponential sums have proved \eqref{circle} for some $\alpha<1/3$ \cite{huxley_b}, namely the best known result \cite{BoWa} allows to take $\alpha$ slightly smaller than~$0.314$.  

\

Several authors have considered the analogue of Gauss circle problem in totally real  number fields \cite{siegel}, \cite{schaal}, \cite{schaal_a}, \cite{rausch} where $n\in\Z$ is replaced by $\lambda\in\mathcal{O}$, with $\mathcal{O}$ the ring of integers. 
A basic issue is to find a real number field analogue of $n\le x$ (note that the existence of infinitely many units prevents us from using the norm). Following Siegel \cite{siegel}, a natural condition is to limit the size of every Galois conjugate. In the real quadratic case of discriminant $\Delta$, the main result in~\cite{schaal_a} implies that
\begin{equation}\label{schaa}
 \sum_{\substack{0\le \lambda<V_1\\ 0\le \lambda^\sigma<V_2}}
 r(\lambda)
 =
 \frac{\pi^2}{\Delta}V_1V_2+O\big((V_1V_2)^\alpha\big)
 \qquad\text{for any }\alpha>\frac 23
\end{equation}
where $\lambda^\sigma$ is the real conjugate of $\lambda$ and $r(\lambda)$ is defined in the natural way:
\begin{equation}\label{defr}
 r(\lambda)=
 \#\big\{ (\xi,\eta)\in\mathcal{O}^2\;:\; \lambda= \xi^2+\eta^2 \big\}.
\end{equation}

The purpose of this paper is to get an analogue of \eqref{corre} for real quadratic fields by applying the hyperbolic circle problem for products of two upper half-planes. This latter problem was studied in general in \cite{BrGrMi} for multiple products corresponding  to totally real number fields of arbitrary degree. The underlying analysis  involves the spectral theory of Hilbert-Maass forms.

To state our main results we use the abbreviations
\begin{equation}\label{def_N}
 N_D(V_1,V_2)
 =
 \sum_{\substack{0\le \lambda<V_1\\ 0\le \lambda^\sigma<V_2}}
 r(\lambda)r(\lambda+1)
 \qquad\text{and}\qquad
 C_D = 
 \frac{32\Delta}{ \big(2-\chi(2)+2\chi(4)\big) \sum_{n=1}^\Delta   n^2\chi(n) }
\end{equation}
where $\Delta$ is the discriminant of the quadratic real field $\Q(\sqrt{D})$ we are considering and $\chi$ is the character corresponding to the Kronecker symbol $\Big(\frac{\Delta}{\cdot}\Big)$. See the next section for more details about the notation. 

\medskip

Firstly we state a result like \eqref{schaa} for the self-correlation. 

\begin{theorem}\label{mainth1}
 For $V_1,V_2\to\infty$
 \[
  N_D(V_1,V_2)
  =
  C_D V_1V_2
  +O\big( (V_1V_2)^{3/4} \big).
 \]
\end{theorem}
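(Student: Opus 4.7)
The plan is to translate $N_D(V_1,V_2)$ into a hyperbolic lattice point count on the Hilbert modular surface $SL_2(\mathcal{O})\backslash(\mathbb{H}\times\mathbb{H})$ and then invoke the spectral expansion established in \cite{BrGrMi}, following the same philosophy used in \cite{iwaniec} for \eqref{corre} but one dimension higher.

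\textbf{Step 1 (Geometric reformulation).} Via $\lambda=\xi^2+\eta^2=(\xi+i\eta)(\xi-i\eta)$ one has $r(\lambda)=\#\{\alpha\in\mathcal{O}[i]:\alpha\bar\alpha=\lambda\}$, and therefore
\[
N_D(V_1,V_2)=\#\bigl\{(\alpha,\beta)\in\mathcal{O}[i]^{\,2}:\beta\bar\beta-\alpha\bar\alpha=1,\ 0\le\alpha\bar\alpha<V_1,\ 0\le(\alpha\bar\alpha)^\sigma<V_2\bigr\}.
\]
The form $Q(\alpha,\beta)=\beta\bar\beta-\alpha\bar\alpha$ is an indefinite Hermitian form of signature $(1,1)$ over $\mathcal{O}[i]$, i.e.\ of signature $(2,2)$ over each archimedean completion. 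Its automorphism group is commensurable with $SL_2(\mathcal{O})$ under the exceptional isogeny $SU(1,1)\simeq SL_2(\mathbb{R})$ applied at both infinite places. This identifies the level set $\{Q=1\}$ with a homogeneous space $G/H$ for which the $\mathcal{O}$-points split into finitely many $SL_2(\mathcal{O})$-orbits.

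\textbf{Step 2 (Reduction to a hyperbolic circle problem).} Choose orbit representatives $(\alpha_0,\beta_0)$ of $Q=1$. Under the identification above, the size conditions on $\alpha\bar\alpha$ and its conjugate at the two archimedean places translate, up to bounded multiplicative factors, into the condition that $\gamma z_0$ lies within a hyperbolic ball in $\mathbb{H}\times\mathbb{H}$ of radii $\cosh^{-1}(c_1 V_1)$ and $\cosh^{-1}(c_2 V_2)$, where $z_0\in\mathbb{H}\times\mathbb{H}$ is the base point attached to $(\alpha_0,\beta_0)$. Summing over the finitely many orbits reduces $N_D(V_1,V_2)$ to the hyperbolic lattice counting function on $SL_2(\mathcal{O})\backslash(\mathbb{H}\times\mathbb{H})$ studied in \cite{BrGrMi}.

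\textbf{Step 3 (Spectral expansion and main term).} Apply the spectral expansion of the automorphic kernel in \cite{BrGrMi}, smoothed by an appropriate test function adapted to a box of shape $V_1\times V_2$. The contribution of the constant eigenfunction together with the residues of the Eisenstein series at $s=1$ yields the main term, proportional to $\mathrm{vol}\cdot V_1V_2$. The precise constant $C_D$ is identified by computing the residue at $s=1$ of the Dirichlet series $\sum r(\lambda)\mathbf{N}(\lambda)^{-s}$, which factors in terms of $\zeta_K(s)=\zeta(s)L(s,\chi)$; the local factor at the prime $2$ of $\mathcal{O}$, which depends on its splitting behaviour, produces the explicit coefficient $2-\chi(2)+2\chi(4)$, and the class number formula converts the resulting residue into $C_D$ as given in \eqref{def_N}.

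\textbf{Step 4 (Error term from cusp forms).} The remaining contributions come from the tempered discrete spectrum of Hilbert-Maass cusp forms, from possible exceptional eigenvalues, and from Eisenstein series on the critical line. The main obstacle is to bound these uniformly in the two parameters $V_1,V_2$, which is the analogue (in two dimensions) of the difficulty in \eqref{corre} and accounts for why the resulting exponent $3/4$ stands in relation to the classical $2/3$ exactly as the Schaal exponent $2/3$ for \eqref{schaa} stands to the classical $1/3$. The cusp form contribution is controlled by combining the Rankin-Selberg average bound for Fourier coefficients of Hilbert-Maass forms, the asymptotics of $K$-Bessel functions at the two archimedean places on hyperbolic balls, and a spectral large sieve; the exponent $3/4$ emerges as the optimal truncation balancing the error from the spectral sum against the smoothing error. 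Exceptional eigenvalues and the lack of sharp $L^\infty$ bounds for Hilbert-Maass forms are handled, as in \cite{iwaniec}, by working with the average Weyl law rather than pointwise bounds on eigenfunctions.
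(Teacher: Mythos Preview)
Your outline has the right architecture---translate to a lattice-point count on a product of upper half-planes, expand spectrally, extract the main term from the trivial eigenfunction, bound the rest---but several of the load-bearing steps are either misidentified or left unspecified in a way that hides the actual work.

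\textbf{The group.} You work throughout with the full Hilbert modular group $SL_2(\mathcal{O})$. The paper does not: the bijection between solutions of $A^2+B^2=C^2+D^2+1$ and group elements (your Step~1--2) lands precisely on the congruence-type subgroup $\Gamma=\{\gamma\in PSL_2(\mathcal{O}):a+d,\,b+c\in 2\mathcal{O}\}$ of Lemma~\ref{alemma}, and the count is a single orbit at the base point $\mathbf{i}=(i,i)$, not a sum over finitely many $SL_2(\mathcal{O})$-orbits. This matters because the constant $C_D$ is exactly $32\pi^2/|\Gamma\backslash\mathbb{H}^2|$; the paper computes this volume by determining the index $[\Gamma_{\mathcal{O}}:\Gamma]\in\{6,9,15\}$ (Proposition~\ref{cosets}) and combining it with Siegel's formula for $|\Gamma_{\mathcal{O}}\backslash\mathbb{H}^2|$. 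Your Step~3 attributes the factor $2-\chi(2)+2\chi(4)$ to the local factor at~$2$ of a Dirichlet series $\sum r(\lambda)\mathbf{N}(\lambda)^{-s}$; in the paper it comes from this index, packaged as $6-3\chi(2)+6\chi(4)$.

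\textbf{The smoothing.} Your Step~4 says the $3/4$ exponent ``emerges as the optimal truncation balancing the error from the spectral sum against the smoothing error,'' but you never say what the smoothing is, and this is where the exponent is actually produced. The Selberg transform $h_V$ of the sharp cutoff $\chi_{[0,V]}$ decays only like $t^{-3/2}$, too slowly to satisfy~\eqref{cond_h} and apply the spectral theorem directly. The paper's device (Lemma~\ref{hyp_conv}, following \cite{chamizo}) is to replace $\chi_V$ by a hyperbolic convolution $\chi_V\ast\chi_v$ with $v$ small; the Selberg transform becomes the \emph{product} $h_V h_v$, which decays like $t^{-3}$ and is admissible. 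One then chooses $v_1=v_2=(V_1V_2)^{-1/2}$ and reads off $H_j\ll V_j^{1/2}(V_1V_2)^{1/8}$ from the explicit bounds~\eqref{t_real}, giving $H_1H_2\ll(V_1V_2)^{3/4}$. The tools you invoke instead (Rankin--Selberg averages of Fourier coefficients, a spectral large sieve) are not used here; the spectral side is controlled by the local Weyl-law/Bessel inequality \cite[Th.~4.2]{BrGrMi}, packaged as Lemma~\ref{sp_th}.

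\textbf{Exceptional eigenvalues.} You note they must be ``handled,'' but do not say how. The paper uses the Kim--Shahidi bound $\Im t_{\ell j}<1/9$ to show their contribution is $O((V_1V_2)^{11/18})$, which is absorbed by $(V_1V_2)^{3/4}$; this is why Theorem~\ref{mainth1}, unlike Theorem~\ref{mainth2}, is unconditional.
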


Secondly, we are interested in the global uniformity in $V_1$ and $V_2$ that is not considered in~\cite{BrGrMi}. Note that there are limiting arithmetical situations, for instance if $V_2$ is like $V_1^{-1}$ we are essentially considering solutions of Pell's equation which have an exponential spacing, too sparse to be captured by harmonic analysis and by an asymptotic formula.

\begin{theorem}\label{mainth2}
 For $0<V_2<1$ and $V_1V_2^2\to\infty$, if there are no exceptional eigenvalues then
 \[
  N_D(V_1,V_2)
  =
  C_D V_1V_2
  +O\big( V_1^{3/4}V_2^{1/2} \big)
 \]
 and if there exist exceptional eigenvalues then we have to add 
 $O\big( V_1^{1/2+c}V_2^{1/4} \big)$ with $c=\sup \Im t_{\ell_ 1}$.  
\end{theorem}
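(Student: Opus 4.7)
The plan is to adapt the proof of Theorem~\ref{mainth1} to the asymmetric regime $V_2<1$, tracking the separate dependence of every error contribution on $V_1$ and $V_2$ rather than on their product. The starting point is the same automorphic interpretation: $N_D(V_1,V_2)$ is encoded as a pair of hyperbolic lattice point counts on the Hilbert modular variety $\Gamma\backslash(\mathbb{H}\times\mathbb{H})$, so that each factor $r(\lambda)$ or $r(\lambda+1)$ arises from orbits of a fixed point in $\mathbb{H}\times\mathbb{H}$ at a prescribed hyperbolic distance in each coordinate.

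The first step is to smooth the box $[0,V_1)\times[0,V_2)$ by a tensor product of Selberg-admissible radial kernels $k_{R_j,\eta_j}$ in each copy of $\mathbb{H}$, with radii $R_j\asymp\log V_j$ and smoothing widths $\eta_j$. In the balanced setting of Theorem~\ref{mainth1} these widths are essentially equal; here, because $V_2$ is bounded, one must instead take $\eta_2$ of bounded size and $\eta_1$ small, so that the smoothing-off error is of order $O(V_1^{3/4}V_2^{1/2})$ while spectral decay for $|t_{\ell,1}|\gg\eta_1^{-1}$ is retained. The asymmetric choice is forced by the hypothesis $V_2<1$ and is the mechanism by which the exponent of $V_2$ drops from $3/4$ to $1/2$ when passing from Theorem~\ref{mainth1} to the present statement.

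The second step is the spectral expansion on $\Gamma\backslash(\mathbb{H}\times\mathbb{H})$. The contribution of the constant function produces the main term $C_D V_1 V_2$, with $C_D$ as in \eqref{def_N}, since the leading volume computation is insensitive to $V_2<1$ once carried out carefully. The Eisenstein contribution is controlled using the functional equation and standard bounds for the Hecke $L$-function attached to $\chi$, following \cite{BrGrMi}; in the present asymmetric regime it contributes $O(V_1^{3/4}V_2^{1/2})$. I expect both of these pieces to go through as in Theorem~\ref{mainth1} with book-keeping only.

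The main obstacle is the cuspidal contribution. For each Hilbert-Maass form $u_\ell$ with spectral parameters $(t_{\ell,1},t_{\ell,2})$, its contribution is the product $\widehat{k_{R_1,\eta_1}}(t_{\ell,1})\widehat{k_{R_2,\eta_2}}(t_{\ell,2})$ weighted by an appropriate Fourier coefficient of $u_\ell$. When all $t_{\ell,j}$ are real the two transforms decay for $|t_{\ell,j}|\gg\eta_j^{-1}$, and combining a spectral large sieve for Hilbert-Maass forms with a Weyl-type individual bound yields the announced $O(V_1^{3/4}V_2^{1/2})$. When an exceptional parameter $t_{\ell,1}=i\tau$ with $0<\tau\le c$ appears, the transform in the first variable grows like $V_1^\tau$ rather than decaying; pairing this with the non-exceptional second transform of size $O(V_2^{1/4})$ and with the trivial $V_1^{1/2}$ from the pointwise size of $u_\ell$ produces the extra term $O(V_1^{1/2+c}V_2^{1/4})$, where $c=\sup_\ell\Im t_{\ell,1}$ is taken over the finitely many exceptional forms. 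Verifying rigorously that only finitely many such forms appear in the effective range and that no hidden cancellation in the second coordinate is lost is the technical heart of the argument.
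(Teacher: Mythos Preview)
Your outline follows the same overall strategy as the paper---automorphic kernel interpretation via Lemma~\ref{alemma}, asymmetric smoothing, spectral expansion, and separate treatment of exceptional spectrum---but as written it is a plan rather than a proof, and several of the mechanisms you invoke are not the ones the paper actually uses.

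Concretely: the paper does not appeal to the functional equation of the Hecke $L$-function for the Eisenstein contribution, nor to a spectral large sieve for the cuspidal part. Both are handled uniformly by the Bessel-type inequality packaged in Lemma~\ref{sp_th} (ultimately \cite[Th.~4.2]{BrGrMi}), which bounds the combined discrete-plus-continuous spectral sum in dyadic windows by $O(2^{2n_1+2n_2})$. This reduces everything to estimating the quantities $H_1,H_2$ and the suprema of the Selberg transforms at exceptional parameters; no $L$-function input or sieve is needed. You should use Lemma~\ref{sp_th} directly.

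The missing technical core is the explicit choice of smoothing parameters and the resulting transform estimates. The paper takes $v_1=(V_1V_2^2)^{-1/2}$ and $v_2=V_1^{-1/2}$ in Proposition~\ref{NDker}, so that $V_j'-V_j$ is dominated by the target error. The key point you gloss over is that for $V_2<1$ one must use the \emph{second} bound in \eqref{t_real}, namely $h_V(t)\ll V(1+t^2V)^{-3/4}$ for $V\le 1$, when estimating $h_{V_2'}$; this is what produces $H_2\ll V_1^{1/8}V_2^{1/4}$ and is the precise mechanism by which the exponent of $V_2$ drops from $3/4$ to $1/2$. Your description (``$\eta_2$ of bounded size'') is qualitatively right but does not pin this down.

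Finally, your accounting of the exceptional contribution is imprecise. The term $V_1^{1/2+c}$ comes from the growth $h_{V_1'}(t_{\ell_1})\ll V_1^{1/2+\Im t_{\ell_1}}$ in \eqref{t_pure}, not from ``the pointwise size of $u_\ell$''; the eigenfunction values are absorbed into the Bessel inequality. In the second coordinate the relevant bound at exceptional $t_{\ell_2}$ is $h_{V_2'}(t_{\ell_2})\ll V_2$ (again from \eqref{t_pure} with $V\le 1$), so the totally exceptional piece of $\mathcal{M}$ contributes $O(V_1^{1/2+c}V_2)$ and is absorbed; the surviving exceptional term arises from $H_2\cdot\sup_{\ell\in\Lambda_1}|h_1(t_{\ell_1})|$.
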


{\sc Remark}. 
In principle one could suspect a chaotic behavior of $C_D$ because of the arithmetic nature of the character sum but it is not difficult to prove that $C_D\asymp D^{-3/2}$. A precise result is included in \S\ref{num_res}. 
It may sound surprising that the  existence or not  of exceptional eigenvalues plays a role in Theorem~\ref{mainth2} but not in Theorem~\ref{mainth1}. In the terminology  in \cite{BrGrMi}, we have a large spectral gap and the influence of the exceptional eigenvalues in Theorem~\ref{mainth1} is absorbed by the error term. The fundamental result here is the bound of Kim and Shahidi \cite{KiSh} for the size of the potential exceptional eigenvalues
that allows to take $c=1/9$ in Theorem~\ref{mainth2}. 

\section{Notation and basic concepts}

We follow mainly the notation of \cite{BrMi} and \cite{BrGrMi}.
We recall it briefly reviewing at the same time the basic concepts. We use Ladau's $O$-notation and Vinogradov's $\ll$-notation indistinctly.

\

The Poincar\'e half-plane $\mathbb{H}$ is the Riemannian manifold given by the upper half-plane $\Im z>0$ endowed with the hyperbolic metric
$y^{-2}(dx^2+dy^2)$, which induces the invariant measure $d\mu(z)=y^{-2}dxdy$. 
It is possible to give an explicit formula for the corresponding hyperbolic distance~$\rho$, namely
\begin{equation}\label{rhou}
 \cosh\rho(z,w)=1+2u(z,w)
 \qquad\text{with}\quad
 u(z,w)
 =
 \frac{|z-w|^2}{4\Im z\,\Im w}.
\end{equation}
The group 
$\textrm{PSL}_2(\R)=\textrm{SL}_2(\R)/\{\pm \textrm{Id}\}$
acts faithfully on $\mathbb{H}$ in the standard way by linear fractional transformations and in fact it coincides with the group of orientation preserving isometries of $\mathbb{H}$. This implies in particular that $u$ is invariant, meaning $u(z,w)=u\big(\gamma(z),\gamma(w)\big)$
for any $z,w\in \mathbb{H}$ and  $\gamma\in \textrm{PSL}_2(\R)$.

For each discrete subgroup $\Gamma < \textrm{PSL}_2(\R)$ such that $\Gamma\backslash \mathbb{H}$ has finite volume, the spectral theory of automorphic forms  allows to expand any 
$f\in L^2(\Gamma\backslash \mathbb{H})$
in terms of the eigenfunctions of the Laplace-Beltrami operator 
$\Delta =-y^2\big(\partial^2_x+\partial^2_y\big)$.
In some sense, 
the role of the Fourier transform is played in this context by the \emph{Selberg transform}
\[
 k\in C_0^\infty\big([0,\infty)\big)
 \longmapsto
 h(t)
 =
 \int_{\mathbb{H}}
 k\big(u(z,i)\big) y^{1/2+it}\;d\mu(z).
\]
As in the case of the Fourier transform, we can relax a lot the $C_0^\infty$ regularity still having a sound and useful Selberg transform. It is easier to introduce the conditions in terms of the transform itself, taking for granted the existence of the integral. Following Selberg \cite[Satz 3.4]{berg},  we ask for the existence of a strip $S_\delta=\{z\,:\,|\Im z|<1/2+\delta\}$ with $\delta >0$ such that
\begin{equation}\label{cond_h}
 h\text{ is holomorphic in $S_\delta$}
 \qquad\text{and}\qquad
 |h(z)|\ll (|z|+1)^{-2-\delta}
 \text{ for }z\in S_\delta.
\end{equation}
This is satisfied when $k\in C_0^\infty$ (see \cite[\S1.8]{iwaniec}).

A novelty with respect to the Euclidean setting is that in the cases of arithmetical relevance e.g., 
$\Gamma=\textrm{PSL}_2(\Z)$,
there is a discrete spectrum (corresponding mainly to Maass cusp forms) 
and a continuous spectrum (corresponding to Eisenstein series).
This non-classical harmonic analysis built with nonholomorphic automorphic forms has had a profound impact on analytic number theory specially  since the  development of Kuznetsov's formula \cite{hafner}. 

\

We focus on the case of real quadratic number fields $\Q(\sqrt{D})$ with $D\in\Z_{>1}$ squarefree. The corresponding ring of integers is 
\[
 \mathcal{O}=
 \begin{cases}
  \Z[\sqrt{D}] &\text{if } D\not\equiv 1\pmod{4},
  \\
  \Z\big[(1+\sqrt{D})/2\big] &\text{if } D\equiv 1\pmod{4}.
 \end{cases}
\]
To parallel the previous case $\textrm{PSL}_2(\Z)$,  the natural object to work with is the full Hilbert modular group 
\begin{equation}\label{hmg}
 \Gamma_{\mathcal{O}}=\big\{(\gamma, \gamma^\sigma)\,:\,\gamma\in \textrm{PSL}_2(\mathcal{O})\big\}
\end{equation}
acting on $\mathbb{H}^2= \mathbb{H}\times \mathbb{H}$ 
where $\gamma^\sigma$  denotes the action by the nontrivial element in the Galois group of $\Q(\sqrt{D})$ (the real conjugation) on the entries of $\gamma$. 
It turns out that $\Gamma_{\mathcal{O}}$ is a discrete subgroup of $\textrm{PSL}_2(\R)^2$
and $\Gamma_{\mathcal{O}}\backslash\mathbb{H}^2$ has finite volume. In general the groups with 
these two properties
are called \emph{lattices} and they are said to be \emph{irreducible} if the projections on each factor of  $\textrm{PSL}_2(\R)^2$ are dense. This avoids artificial examples like $\textrm{PSL}_2(\Z)^2$ that can be ``reduced'' to  discrete groups acting on $\mathbb{H}$. 

If $\Gamma$ is an irreducible lattice,  spectral theory allows to analyze $L^2(\Gamma\backslash\mathbb{H}^2)$ in terms of the simultaneous eigenfunctions 
$\psi=\psi(z_1,z_2)$
of $\Delta_{z_1}$ and $\Delta_{z_2}$ (where the subscript indicates the variable).
Imposing $\psi\in L^2(\Gamma\backslash\mathbb{H}^2)$ we have a discrete sequence of couples of eigenvalues  $\{\lambdabold_\ell\}_\ell$ with  $\lambdabold_\ell=(\lambda_{\ell_1}, \lambda_{\ell_2})$ and corresponding orthonormal eigenfunctions $\psi_\ell${, 
\[
 \Delta_{z_1}\psi_\ell=\lambda_{\ell_1}\psi_\ell,
 \qquad
 \Delta_{z_2}\psi_\ell=\lambda_{\ell_2}\psi_\ell.
\]}%
We reserve the label $\ell=0$ for the trivial couple $\lambdabold_0=(0,0)$
and consequently $\psi_0=|\Gamma\backslash\mathbb{H}^2|^{-1/2}$. 
It is said that~$\lambdabold_\ell$ is 
\emph{exceptional} 
if 
$0< \lambda_{\ell_1}<1/4$
or
$0< \lambda_{\ell_2}<1/4$,
and it is said to be 
\emph{totally exceptional} 
if both conditions hold simultaneously. 
The relevance of the exceptional $\lambdabold_\ell$  is that the analogue of the Fourier transform has a quite different behavior at them. 
To emphasize this point we write
\[
 \lambda_{\ell_ j}
 =
 \frac 14-t_{\ell_ j}^2
 =
 \Big(\frac 12 + it_{\ell_ j}\Big)
 \Big(\frac 12 - it_{\ell_ j}\Big)
 \qquad
 \text{with}\quad 
 t_{\ell_ j}\in [0,\infty)\cup i(0,1/2].
\]
(Note that we slightly divert  from \cite{BrGrMi}). In this way,  $t_{01}=t_{02}=i/2$ and $\lambdabold_\ell$ 
is exceptional if 
$\Im t_{\ell_ 1}$
or
$\Im t_{\ell_ 2}$
belong to $(0,1/2)$. 
Although it is conjectured that there are no exceptional 
$\lambdabold_\ell$ in the  cases of arithmetic interest (this is the generalization of a famous conjecture due to Selberg \cite{selberg}), in principle there might be infinitely many such 
$\lambdabold_\ell$. 
On the other hand, only finitely many  can be totally exceptional
(because the set
$\{\lambdabold_\ell\}$ is a discrete set) and the result of Kim and Shahidi \cite{KiSh} implies  
$\Im t_{\ell_ 1}, \Im t_{\ell_ 2}<1/9$
for the lattices $\Gamma$ in this
 paper. 

\

As in the one-dimensional case, 
it turns out that $\{\psi_\ell\}_\ell$ does not span 
$L^2(\Gamma\backslash\mathbb{H}^2)$
if $\Gamma\backslash\mathbb{H}^2$ is not compact and a continuous spectrum corresponding to Eisenstein series enters into the game. The corresponding spectral theorem is technical in nature and we only need a particular case, so we have limited its application to the proof of a single lemma. The reader preferring not to enter into the details of the 
proof can use Lemma~\ref{sp_th} as a black box embodying the spectral theorem. We refer the reader to \cite{BrGrMi} and \cite[Ch.1]{BrMi} for more extensive comments on the spectral theorem (see also \cite{harish} for a more comprehensive theory).

Rather than the expansion of functions in $L^2(\Gamma\backslash\mathbb{H}^2)$, we need to expand a type of automorphic kernels. To introduce them 
it is convenient to extend the definition of $u$ in \eqref{rhou} to $\mathbb{H}^2$ in the natural manner:
\[
 u(\mathbf{z},\mathbf{w})=
 \big(
 u(z_1,w_1),
 u(z_2,w_2)
 \big)
 \qquad\text{for}
 \quad
 \mathbf{z}=(z_1,z_2),\,\mathbf{w}=(w_1,w_2)
 \in 
 \mathbb{H}^2.
\]
Given an irreducible lattice $\Gamma$ and $k:[0,\infty)^2\longrightarrow\C$ decaying rapidly enough at zero and infinity, for instance $k$ compactly supported, we can construct an \emph{automorphic kernel}
\begin{equation}\label{aut_ker}
 K(\mathbf{z},\mathbf{w})
 =
 \sum_{\gamma\in\Gamma}
 k\big(u(\gamma(\mathbf{z}),\mathbf{w})\big).
\end{equation}
Using the fact that  $u$ is invariant by isometries, we deduce that $K$ is actually automorphic in both variables, that is
\[
K(\mathbf{z},\mathbf{w})=K(\gamma(\mathbf{z}),\mathbf{w})=K(\mathbf{z},\gamma(\mathbf{w}))
\qquad\text{for every $\gamma\in\Gamma$}.
\]

\section{An arithmetic lemma}

To study the sum of $r(\lambda)r(\lambda+1)$ by analytic methods it is convenient to consider more general weighted sums
\begin{equation}\label{N_k}
 \mathcal{N}_k
 =
 \sum_{\lambda\in\mathcal{O}}
 r(\lambda)r(\lambda+1)
 k(\lambda, \lambda^\sigma)
 \qquad\text{with}
 \quad
 k:\R^2\rightarrow\C.
\end{equation}

A preliminary consideration is whether this sum actually makes sense when $k$ decays rapidly enough. We state a general elementary result of this kind although, for the aims of this paper, we could restrict ourselves to compactly supported functions. 

\begin{lemma}\label{conver}
 If 
 $k(x,y)\ll \big(x+y+1)^{-\alpha}$ with $\alpha>3$ for $x,y\ge 0$ then $\mathcal{N}_k$ is well-defined. 
\end{lemma}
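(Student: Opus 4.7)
The plan is to establish absolute convergence by a dyadic decomposition. First, I would observe that $r(\lambda) r(\lambda+1) \neq 0$ forces both $\lambda$ and $\lambda+1$ to be sums of two squares in $\mathcal{O}$; applying the Galois automorphism, this means $\lambda \geq 0$ and $\lambda^\sigma \geq 0$. Hence the summation is effectively over the first quadrant, where the decay hypothesis on $k$ gives $|k(\lambda, \lambda^\sigma)| \ll (\lambda + \lambda^\sigma + 1)^{-\alpha}$.

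Next, I would decompose this quadrant dyadically: for $V_1, V_2 \geq 1$ running over powers of $2$, set
\[
 S_{V_1, V_2} = \{\lambda \in \mathcal{O} : V_1 \leq \lambda + 1 < 2V_1,\ V_2 \leq \lambda^\sigma + 1 < 2V_2\},
\]
on which $|k(\lambda, \lambda^\sigma)| \ll (V_1+V_2)^{-\alpha}$, so that
\[
 |\mathcal{N}_k| \ll \sum_{V_1, V_2} (V_1+V_2)^{-\alpha} \sum_{\lambda \in S_{V_1, V_2}} r(\lambda) r(\lambda+1).
\]

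The key pointwise bound is $r(\lambda) \ll \sqrt{V_1 V_2}$ on $S_{V_1, V_2}$: a representation $\lambda = \xi^2 + \eta^2$ forces $\xi^2 \leq \lambda < 2V_1$ and $(\xi^\sigma)^2 \leq \lambda^\sigma < 2V_2$, so the image of $\xi$ under $\xi \mapsto (\xi, \xi^\sigma) \in \mathbb{R}^2$ (a lattice of covolume $\sqrt{\Delta}$) lies in a rectangle of area $\asymp \sqrt{V_1 V_2}$, contributing $\ll \sqrt{V_1 V_2}$ points, each admitting at most two choices of $\eta$. The same bound holds for $r(\lambda+1)$. Combined with the mean value estimate $\sum_{\lambda \in S_{V_1, V_2}} r(\lambda+1) \ll V_1 V_2$, which follows from Schaal's formula \eqref{schaa} (or, more elementarily, from counting $\mathcal{O}^2$-lattice points in $\mathbb{R}^4$ inside the corresponding product of annuli), I would obtain
\[
 \sum_{\lambda \in S_{V_1, V_2}} r(\lambda) r(\lambda+1) \ll \sqrt{V_1 V_2}\cdot V_1 V_2 = (V_1 V_2)^{3/2}.
\]

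Summing dyadically and symmetrizing under $V_1 \leq V_2$ reduces matters to $\sum_{V_2 \geq 1} V_2^{3 - \alpha}$, a geometric series that converges precisely when $\alpha > 3$. The main subtle point is hitting this threshold exactly: using the crude bound $r(\lambda) r(\lambda+1) \ll V_1 V_2$ together with the trivial count $\#S_{V_1, V_2} \ll V_1 V_2$ would yield $(V_1 V_2)^2$ per box and force $\alpha > 4$, so it is essential to apply the square-root saving in one factor while retaining the mean value estimate in the other.
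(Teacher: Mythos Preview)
Your argument is correct and hits the same threshold $\alpha>3$, but it proceeds quite differently from the paper. The paper writes $\lambda=n+m\sqrt{D}$, invokes the crude divisor-type bound $r(\lambda)=O(n^{1+\epsilon})$ coming from classical representation numbers over~$\Z$, replaces $r(\lambda)r(\lambda+1)$ by $r(\lambda)^2$ via AM--GM, and then opens one factor of $r$ to reduce everything to the single four-dimensional lattice sum $\sum'(a^2+Db^2+c^2+Dd^2)^{1+\epsilon-\alpha}$, which is compared to a radial integral in~$\R^4$. Your route is more geometric: a dyadic decomposition in both real embeddings, the pointwise bound $r(\lambda)\ll\sqrt{V_1V_2}$ from counting $\mathcal{O}$-points in a box under the Minkowski map, and a mean-value bound on $\sum r(\lambda+1)$. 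One small caveat: the box count $\ll\sqrt{V_1V_2}$ is not a consequence of the covolume being $\sqrt{\Delta}$ (thin rectangles could beat that for a general lattice) but of the fact that the Minkowski lattice of $\mathcal{O}$ has shortest vector $\geq\sqrt{2}$, so a packing argument applies once $V_1,V_2\geq 1$; you may want to say this explicitly. Your pointwise estimate is actually sharper than the paper's when $\lambda$ and $\lambda^\sigma$ are unbalanced (morally $r(\lambda)\ll\sqrt{N(\lambda)}+1$ versus $r(\lambda)\ll(\lambda+\lambda^\sigma)^{1+\epsilon}$), and your scheme would extend more naturally to totally real fields of higher degree; the paper's version, on the other hand, is shorter and needs no separate mean-value input.
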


\begin{proof}
 Note first that if $\lambda$ is a sum of two squares then so  is $\lambda^\sigma$ and if both are positive then we can restrict the sum to $\lambda,\lambda^\sigma> 0$.
 
 If $\lambda=n+m\sqrt{D}$, expanding $\lambda=(a+b\sqrt{D})^2+(c+d\sqrt{D})^2$ we see that $r(\lambda)$ counts the number of integral or half-integral solutions of
 \[
  \begin{cases}
   a^2+c^2+D(b^2+d^2)=n,
   \\
   2ab+2cd=m.
  \end{cases}
 \]
 The positivity of the first equation shows at once that $r(\lambda)$ is well-defined i.e., $r(\lambda)<\infty$. 
 In fact using that the number of representations of an integer as a sum of two squares in $\Z$ tends to zero when divided by any positive power \cite[Th.\,338]{HaWr}, we have the trivial bound
 $r(\lambda)=O(n^{1+\epsilon})$ for any $\epsilon>0$. 
 Note also that, necessarily, in order to have a solution one must have $|m|<4n$.
 
 The inequality $2r(\lambda)r(\lambda+1)\le r^2(\lambda)+r^2(\lambda+1)$ and the equation $\lambda+\lambda^\sigma= 2 n$ reduce the assertion to proving that
 \[
   \mathop{\sum\!\sum}_{0\le m< 4n}
   \frac{r^2(n+m\sqrt{D})}{n^\alpha}
   <\infty
   \qquad\text{for}\quad\alpha>3.
 \]
 Using the trivial bound we have
 \[
   \mathop{\sum\!\sum}_{0\le m< 4n}
   \frac{r^2(n+m\sqrt{D})}{n^\alpha}
   \ll
   \mathop{\sum\!\sum}_{0\le m< 4n}
   \frac{r(n+m\sqrt{D})}{n^{\alpha-1-\epsilon}}
   \ll
   \sideset{}{'}\sum_{a,b,c,d}
   \big(a^2+Db^2+c^2+Dd^2\big)^{1+\epsilon-\alpha}
 \]
where we disregard the value $a=b=c=d=0$ in the last sum. It is plain that the latter series converges when $\alpha>3+\epsilon$, for instance by comparing with the integral $\int_{B'} \|\vec{x}\|^{-2-\delta}d\vec{x}$, $\delta>0$, where $B'$ is the exterior of the unit ball in $\R^4$. 
\end{proof}

\

The key point to apply spectral methods is to translate $\mathcal{N}_k$ into an automorphic kernel \eqref{aut_ker}. The argument is an adaptation of that in the 1-dimensional case in \cite[Cor.12.2]{iwaniec}.

\begin{lemma}\label{alemma}
 Consider the lattice in $\textrm{\rm PSL}_2(\R)^2$ defined as
 \[
 \Gamma=
 \big\{
 (
 \gamma,\gamma^\sigma
 )
 \;:\; \gamma\in M/\{\pm\text{\rm Id}\}
 \big\}
 \quad\text{with}\quad
 M=
 \left\{
 \begin{pmatrix}
  a
  &
  b
  \\
  c
  &
  d
 \end{pmatrix}
 \in\textrm{\rm SL}_2(\mathcal{O})
 \,:\,
 a+d, b+c\in 2\mathcal{O}
 \right\}.
 \]
 Then for $k$ as in Lemma~\ref{conver} we have
 \[
  \mathcal{N}_k
  =
  2
  \sum_{\gamma\in \Gamma}
  k\big(u(\gamma (\mathbf{i}),\mathbf{i})\big)
  \qquad\text{where}\quad \mathbf{i}=(i,i).
 \]
\end{lemma}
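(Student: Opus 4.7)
The plan is to unfold $\mathcal{N}_k$ via the definition of $r$ and then set up an explicit bijection between the resulting quadruples and the matrices of $M$. Expanding both $r(\lambda)$ and $r(\lambda+1)$ as counts of representations yields
\begin{equation*}
 \mathcal{N}_k
 =
 \sum_{\substack{(\xi,\eta,\xi',\eta')\in\mathcal{O}^4\\ \xi'^2+\eta'^2-\xi^2-\eta^2=1}}
 k\bigl(\xi^2+\eta^2,\,(\xi^\sigma)^2+(\eta^\sigma)^2\bigr),
\end{equation*}
with absolute convergence of the rearrangement guaranteed by Lemma~\ref{conver}. The key step is to parametrize the quadruples above by matrices via
\begin{equation*}
 \gamma
 =
 \begin{pmatrix}\xi+\xi' & \eta+\eta'\\ \eta-\eta' & \xi'-\xi\end{pmatrix},
 \qquad
 \xi = \tfrac{a-d}{2},\ \xi' = \tfrac{a+d}{2},\ \eta = \tfrac{b+c}{2},\ \eta' = \tfrac{b-c}{2}.
\end{equation*}
A direct calculation gives $\det\gamma = \xi'^2+\eta'^2-\xi^2-\eta^2$, so the determinant-one condition is exactly the constraint appearing in the sum. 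The congruences $a+d,\,b+c\in 2\mathcal{O}$ in the definition of $M$ force $a-d = (a+d)-2d$ and $b-c = (b+c)-2c$ to lie in $2\mathcal{O}$ as well, so the inverse formulas return elements of $\mathcal{O}^4$. This matches $M$ bijectively with the index set of $\mathcal{N}_k$.

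It remains to identify the arguments of $k$ with $u(\gamma(\mathbf{i}),\mathbf{i})$. The algebraic identity $(a-d)^2+(b+c)^2 = a^2+b^2+c^2+d^2-2(ad-bc) = a^2+b^2+c^2+d^2-2$ yields
\begin{equation*}
 \xi^2+\eta^2 = \tfrac{a^2+b^2+c^2+d^2-2}{4},
\end{equation*}
which is exactly the standard evaluation of $u(\gamma(i),i)$ dropping out of \eqref{rhou} for $\gamma\in\textrm{SL}_2(\R)$. Applying the Galois automorphism $\sigma$ entrywise gives the analogous identity in the second coordinate, so $k(\lambda,\lambda^\sigma) = k\bigl(u(\gamma(\mathbf{i}),\mathbf{i})\bigr)$ and hence $\mathcal{N}_k = \sum_{\gamma\in M} k\bigl(u(\gamma(\mathbf{i}),\mathbf{i})\bigr)$. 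Since $\gamma$ and $-\gamma$ yield the same value of $u(\gamma(\mathbf{i}),\mathbf{i})$ while representing the same element of $\Gamma = M/\{\pm\textrm{Id}\}$, the sum over $M$ equals $2\sum_{\gamma\in\Gamma}k\bigl(u(\gamma(\mathbf{i}),\mathbf{i})\bigr)$, proving the lemma.

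The only genuine obstacle is the congruence bookkeeping that shows $M$ is exactly the subset of $\textrm{SL}_2(\mathcal{O})$ on which the inverse parametrization lands in $\mathcal{O}^4$; everything else is a routine polynomial identity combined with the elementary $\pm\textrm{Id}$ identification.
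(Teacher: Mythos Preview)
Your proof is correct and follows essentially the same approach as the paper: the bijection you set up between quadruples $(\xi,\eta,\xi',\eta')\in\mathcal{O}^4$ with $\xi'^2+\eta'^2-\xi^2-\eta^2=1$ and matrices in $M$ is identical (up to renaming) to the paper's map $(A,B,C,D)\mapsto\begin{pmatrix}A+C & B+D\\ D-B & A-C\end{pmatrix}$, and the identification of $\xi^2+\eta^2$ with $u(\gamma(i),i)$ plus the $\pm\mathrm{Id}$ accounting match exactly. If anything, you have spelled out the congruence bookkeeping and the evaluation of $u(\gamma(i),i)$ in more detail than the paper, which simply asserts that ``a calculation proves $u(\tau(i),i)=C^2+D^2$.''
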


\begin{proof} 
The map
\begin{eqnarray*}
 \mathcal{C}:=
 \big\{
 (A,B,C,D)\in \mathcal{O}^4
 \,:\,
 A^2+B^2=C^2+D^2+1
 \big\}
 &\longrightarrow&
 M
 \\
 (A,B,C,D)
 &\longmapsto&
 \begin{pmatrix}
  A+C& B+D
  \\
  D-B& A-C
 \end{pmatrix}
\end{eqnarray*}
clearly establishes a bijection between $\mathcal{C}$ and $M$. On the other hand, if $\tau$ denotes the last matrix a calculation proves  $u\big(\tau(i),i\big)=C^2+D^2$. Hence  
\[
  \mathcal{N}_k
  =
  \sum_{(A,B,C,D)\in\mathcal{C}}
  k\big(C^2+D^2,(C^2+D^2)^\sigma\big)
  =
  \sum_{\tau\in M}
  k\big(u(\tau (i),i), u(\tau^\sigma (i),i)\big).
\]
This proves the result because $\pm\tau$ give rise to the same element in $\Gamma$.
Note that the sign changes do not affect the values of $\lambda=u(\tau (i),i)$ and $\lambda^\sigma=u(\tau^\sigma (i),i)$.
\end{proof}

\section{A rough spectral bound}

Here we state the consequence of the application of the spectral theorem to automorphic kernels in the form needed for our purposes. It will be convenient to classify the labels of the exceptional~$\lambdabold_\ell$ in three sets:
\[
 \Lambda_0
 =
 \big\{
 \ell\,:\, \lambdabold_\ell\text{ totally exceptional}
 \big\},
 \qquad
 \Lambda_j
 =
 \big\{
 \ell\not\in\Lambda_0\,:\, \Im t_{\ell j}\in (0,1/2)
 \big\}
 \quad j=1,2.
\]

\begin{lemma}\label{sp_th}
 Let $k_1$ and $k_2$ be continuous functions $k_j:[0,\infty)\longrightarrow\C$ with Selberg transforms $h_j$ satisfying \eqref{cond_h}. 
 Consider the automorphic kernel \eqref{aut_ker} with $k(x,y)=k_1(x)k_2(y)$. Define
 \[
  H_j
  =
  \sum_{n=0}^\infty
  2^{2n}\sup_{t\in I_n}|h_j(t)|
  \qquad\text{with}\quad
  I_0=[0,2)\text{ and }
  I_n=[2^n,2^{n+1})
  \text{ for }n\ge 1
 \]
 and
  \begin{equation} \label{eq:M}
  \mathcal{M}
  =
  \frac{h_1(i/2)h_2(i/2)}{|\Gamma\backslash\mathbb{H}^2|}
  +
  \sum_{\ell\in\Lambda_0}
  h_1(t_{\ell 1})h_2(t_{\ell 2})
  \overline{\psi}_\ell(\mathbf{z})
 {\psi}_\ell(\mathbf{w}).
 \end{equation}
 Then we have
 \[
  K(\mathbf{z},\mathbf{w})
  =
  \mathcal{M}
  +O_{\mathbf{z}, \mathbf{w},\Gamma}\big(
  H_1H_2
  +H_1 \sup_{\ell\in\Lambda_2}|h_2(t_{\ell_ 2})|
  +H_2 \sup_{\ell\in\Lambda_1}|h_1(t_{\ell_ 1})|
  \big).
 \]
\end{lemma}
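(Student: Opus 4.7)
The plan is to invoke the spectral theorem for $L^2(\Gamma\backslash\mathbb{H}^2)$, expand the kernel $K(\mathbf{z},\mathbf{w})$ in terms of the joint eigenfunctions $\psi_\ell$ plus the continuous (Eisenstein) spectrum, peel off the main term $\mathcal{M}$, and control the remaining sums by a dyadic decomposition combined with the local Weyl law.

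The core input is that the product form $k(x,y)=k_1(x)k_2(y)$ makes the convolution by $K$ factor as a composition of two commuting one-dimensional operators, one per $\mathbb{H}$-factor. Applying the classical Selberg identity (\cite[Satz 3.4]{berg}) in each variable, any joint eigenfunction $\psi_\ell$ with spectral parameters $(t_{\ell_1},t_{\ell_2})$ is mapped to $h_1(t_{\ell_1})h_2(t_{\ell_2})\psi_\ell$. The spectral theorem on $\Gamma\backslash\mathbb{H}^2$ (see \cite{BrGrMi} and \cite[Ch.\,1]{BrMi}) then gives
\[
 K(\mathbf{z},\mathbf{w})
 =\sum_\ell h_1(t_{\ell_1})h_2(t_{\ell_2})\,\psi_\ell(\mathbf{z})\overline{\psi_\ell(\mathbf{w})}
 +\mathcal{E}(\mathbf{z},\mathbf{w}),
\]
where $\mathcal{E}$ is the analogous integral over the continuous spectrum. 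The $\ell=0$ summand is exactly $h_1(i/2)h_2(i/2)/|\Gamma\backslash\mathbb{H}^2|$, and together with the finite sum over $\Lambda_0$ it reconstructs $\mathcal{M}$.

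Next I would split the remaining discrete sum according to whether $\ell\in\Lambda_1$, $\ell\in\Lambda_2$, or $\ell$ lies in neither. In the last case both $t_{\ell_j}$ are real, and I decompose dyadically into $t_{\ell_1}\in I_n$, $t_{\ell_2}\in I_m$. Using Cauchy--Schwarz together with the local Weyl law
\[
 \sum_{\substack{t_{\ell_1}\in I_n\\ t_{\ell_2}\in I_m}}|\psi_\ell(\mathbf{z})|^2\ll_{\mathbf{z},\Gamma}2^{2n}2^{2m}
\]
(a standard consequence of the pre-trace formula on $\Gamma\backslash\mathbb{H}^2$), the contribution is at most
\[
 \sum_{n,m\ge 0}\sup_{I_n}|h_1|\,\sup_{I_m}|h_2|\cdot 2^{2n}2^{2m}=H_1H_2.
\]
For $\ell\in\Lambda_1$ the parameter $t_{\ell_1}$ is confined to the bounded set $i(0,1/2)$, so I pull $|h_1(t_{\ell_1})|$ out as a supremum and dyadically sum only in $t_{\ell_2}$, obtaining the term $H_2\sup_{\ell\in\Lambda_1}|h_1(t_{\ell_1})|$. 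The case $\ell\in\Lambda_2$ is symmetric.

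The main obstacle will be the Eisenstein term $\mathcal{E}(\mathbf{z},\mathbf{w})$. For irreducible Hilbert modular lattices the continuous spectrum is built from one-parameter Eisenstein series attached to the finitely many cusps, whose joint spectral parameters coincide because the unit group action forces $s_1=s_2$; explicitly, the joint eigenvalue has the form $(r(1-r),r(1-r))$ with $r=1/2+it$. Their values on compact sets grow polynomially in $t$ and the Plancherel density matches the dyadic counts used above, so an analogous dyadic scheme yields $|\mathcal{E}(\mathbf{z},\mathbf{w})|\ll_{\mathbf{z},\mathbf{w},\Gamma}H_1H_2$, which is absorbed by the announced error. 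Assembling all pieces together with the extraction of $\mathcal{M}$ completes the proof.
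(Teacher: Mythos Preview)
Your approach is essentially the paper's: invoke the spectral expansion of the automorphic kernel from \cite[(39)]{BrGrMi}, strip off $\mathcal{M}$, and bound the remainder by a dyadic decomposition in $(t_1,t_2)$ controlled by the pointwise Bessel inequality \cite[Th.\,4.2]{BrGrMi} (what you call the local Weyl law). The paper writes $|ab|\le\tfrac12(|a|^2+|b|^2)$ where you invoke Cauchy--Schwarz, and it packages cusp forms and Eisenstein terms together into a single sum $S(n_1,n_2,\mathbf{v})$ before applying the Bessel bound, but these are cosmetic differences; the split over $\Lambda_1$ and $\Lambda_2$ is handled exactly as you describe.

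There is, however, a factual error in your treatment of $\mathcal{E}$. You assert that for irreducible Hilbert modular lattices ``the unit group action forces $s_1=s_2$'', so that the continuous spectrum is a single one-parameter diagonal family with joint eigenvalue $(r(1-r),r(1-r))$. This is not correct: the Eisenstein series are twisted by Gr\"ossencharacters of the unit group, and each nontrivial twist shifts the two spectral parameters off the diagonal. In the expansion the paper quotes this shows up as the additional lattice sum $\sum_{\mu\in\mathcal{L}_\kappa}$ with the argument of $h$ shifted to $\mathbf{t}+\mubold$, so that the pairs $(t_1,t_2)$ occurring in the continuous part genuinely range over both coordinates independently. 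Your claimed endpoint $|\mathcal{E}|\ll H_1H_2$ is still true, and the same dyadic-plus-Bessel argument proves it once the correct two-parameter structure is used; but the one-parameter diagonal picture you sketch is not what is actually going on, and the sentence justifying the bound (``their values on compact sets grow polynomially in $t$ and the Plancherel density matches the dyadic counts'') would need to be replaced by an appeal to the two-variable Bessel inequality exactly as in the discrete case.
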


\noindent{\sc Remark}. 
The dependence of the $O$ constant on  $\mathbf{z}$ and $\mathbf{w}$ could be made explicit but it is irrelevant in our application.
Of course if $\Lambda_0=\emptyset$ the sum over $\ell\in \Lambda_0$ must be omitted in $\mathcal{M}$ and if there are no exceptional eigenvalues, the same applies to the suprema over $\Lambda_1$ and $\Lambda_2$ in the error term. 

\begin{proof}
 The spectral expansion of $K$, the analogue of the Poisson summation formula,  as given in
\cite[(39)]{BrGrMi}, reads
\begin{multline*}
 K(\mathbf{z},\mathbf{w})
 =
 \sum_{\ell} h(\mathbf{t}_{\ell})
 \overline{\psi}_\ell(\mathbf{z})
 {\psi}_\ell(\mathbf{w})
 +
 \\
 +
 2\sum_{\kappa}c_\kappa
 \sum_{\mu\in \mathcal{L}_\kappa}
 \int_{(\R^+)^2}
 h(\mathbf{t}+\mubold)
 \overline{E}(\kappa;i\mathbf{t},i\mubold;\mathbf{z})
 {E}(\kappa;i\mathbf{t},i\mubold;\mathbf{w})
 \; dt_1 dt_2
\end{multline*}
where $h(t_1,t_2)=h_1(t_1)h_2(t_2)$, $\kappa$ runs over the finitely many  inequivalent cusps, $c_\kappa$ are positive constants, $\mathcal{L}_\kappa$ is a  lattice in $\R^2$ and $E$ denotes the Eisenstein series. 
In the first sum the terms with $\ell \in \Lambda_0\cup\{0\}$ contribute exactly as $\mathcal{M}$. 
Let $K^*=K-\mathcal{M}$, we have to prove that it is bounded by 
the error term in the statement. 
Using that $|ab|\le (|a|^2+|b|^2)/2$ we have for $\mathbf{v}=\mathbf{z}$ or~$\mathbf{v}=\mathbf{w}$
{\begin{multline}\label{Kstar}
 |K^*(\mathbf{z},\mathbf{w})|
 \le
 \sum_{\ell\not\in\Lambda_0\cup\{0\}} |h(\mathbf{t}_{\ell})|
 |{\psi}_\ell(\mathbf{v})|^2
 +
 \\
 +
 2\sum_{\kappa}c_\kappa
 \sum_{\mu\in \mathcal{L}_\kappa}
 \int_{(\R^+)^2}
 |h(\mathbf{t}+\mubold)|
 |{E}(\kappa;i\mathbf{t},i\mubold;\mathbf{v})|^2
 \; dt_1 dt_2.
\end{multline}}

Now we need a form of Bessel's inequality that allows to bound, for a fixed $\mathbf{z}\in\mathbb{H}^2$ and every $(n_1,n_2)\in\Z_{\ge 0}^2$, the expression
\[
 S(n_1,n_2,\mathbf{z})
 =
 \sum_{\ell\in \mathcal{X}_d} 
 |{\psi}_\ell(\mathbf{z})|^2
 +
 2\sum_{\kappa}c_\kappa
 \sum_{\mu\in \mathcal{L}_\kappa}
 \int_{\mathcal{X}_c}
 |{E}(\kappa;i\mathbf{t},i\mubold;\mathbf{z})|^2
 \; dt_1 dt_2
\]
where
\[
 \mathcal{X}_d=
 \big\{\ell\,:\,
 t_{\ell j}
 \in I_{n_j}\cup i(0,1/2]
 \big\}
 \qquad\text{and}\qquad
 \mathcal{X}_c=
 \big\{\mathbf{t}\in(\R^+)^2\,:\,
 \pm(t_j+\mu_j)
 \in I_{n_j}
 \big\}.
\]
The instance of Bessel's inequality we need is 
\cite[Th.\,4.2]{BrGrMi}
\begin{equation}\label{bessel_i}
 S(n_1,n_2,\mathbf{z})
 =
 O\big(
 2^{2n_1+2n_2}
 \big).
\end{equation}
The intuitive interpretation is that $\psi_\ell$ and $E$ behave as constants on average.

If we divide the integral in \eqref{Kstar} into the dyadic pieces $\pm (t_j+\mu_j)\in I_{n_j}$ indicated by $\mathcal{X}_c$ and, using the positivity, we apply \eqref{bessel_i} to each of them, we have that the last term in \eqref{Kstar} contributes at most
\[
 2
 \sum_{n_1,n_2=0}^\infty
  \sup_{t\in I_{n_1}}|h_1(t)|
  \sup_{t\in I_{n_2}}|h_2(t)|
 \sum_{\kappa}c_\kappa
 \sum_{\mu\in \mathcal{L}_\kappa}
 \int_{\mathcal{X}_c}
 |{E}(\kappa;i\mathbf{t},i\mubold;\mathbf{v})|^2
 \; dt_1 dt_2
 =
 O(H_1H_2).
\]
The same argument works to get this bound for the contribution of the first term in the right-hand side of \eqref{Kstar} when $t_{\ell 1}$ and $t_{\ell 2}$ are real. The remaining terms have $\ell \in \Lambda_1\cup \Lambda_2$ and we can proceed in the same way keeping the supremum of $h_j$ if $\ell \in \Lambda_j$. For instance, the terms with  $\ell \in \Lambda_1$ contribute at most
\[
 \sup_{t_{\ell_ 1}}|h_1(t_{\ell_ 1})|
 \sum_{\ell\in\Lambda_1} 
 \sum_{n_2=0}^\infty
 \sup_{t\in I_{n_2}}|h_2(t)|
 |{\psi}_\ell(\mathbf{v})|^2
 \le
 \sup_{t_{\ell_ 1}}|h_1(t_{\ell_ 1})|
 \sum_{n_2=0}^\infty
 \sup_{t\in I_{n_2}}|h_2(t)|
 S(0,n_2,\mathbf{v})
\]
and the sum is $O(H_2)$ by \eqref{bessel_i}. 

Therefore, we have proved that $|K^*(\mathbf{z},\mathbf{w})|$ is bounded by the error  term appearing in the statement.
\end{proof}

\section{Volume computations}
The main term \eqref{eq:M} in the spectral expansion depends on the volume of the fundamental region and it becomes closely related to the constant $C_D$ appearing in the asymptotic formulas in Theorem~\ref{mainth1} and Theorem~\ref{mainth2}.

Our aim in this section is to prove the following result:

\begin{proposition}\label{pvolume}
 Let $\Gamma$ be the subgroup of the full Hilbert modular group  
 $\Gamma_\mathcal{O}$
 introduced in Lemma~\ref{alemma}. The volume of a fundamental region of $\Gamma\backslash\mathbb{H}^2$ is given by
 \[
  |\Gamma\backslash\mathbb{H}^2|
  =
  \big(2-\chi(2)+2\chi(4)\big)
  \frac{\pi^2}{\Delta}
  \sum_{n=1}^\Delta
  n^2\chi(n)
 \]
 where $\chi$ is the character corresponding to the Kronecker symbol $\Big(\frac{\Delta}{\cdot}\Big)$ and $\Delta$ is the discriminant of $\Q(\sqrt{D})$ i.e., $\Delta=D$ if $4\mid D-1$ and $\Delta=4D$ otherwise.  
\end{proposition}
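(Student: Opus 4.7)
The plan is to split the computation as $|\Gamma\backslash\mathbb{H}^2|=[\Gamma_\mathcal{O}:\Gamma]\cdot|\Gamma_\mathcal{O}\backslash\mathbb{H}^2|$ and evaluate the two factors separately: the volume of the full Hilbert modular group by Siegel's formula, and the index by reduction modulo the ideal $2\mathcal{O}$.

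For the volume I would quote Siegel's classical formula, which in the hyperbolic measure reads
\[
 |\Gamma_\mathcal{O}\backslash\mathbb{H}^2|=8\pi^2\,\zeta_K(-1)
\]
for $K=\Q(\sqrt D)$; the factor $(2\pi)^2$ converts Siegel's own measure to the hyperbolic one, and for $K=\Q$ the one-dimensional analogue reproduces the familiar $\pi/3$. To turn this into an arithmetic expression I would write $\zeta_K(s)=\zeta(s)L(s,\chi)$, use $\zeta(-1)=-1/12$ together with $L(-1,\chi)=-B_{2,\chi}/2$, and expand $B_{2,\chi}$ via the Bernoulli polynomial $B_2(x)=x^2-x+\tfrac16$. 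Because $\chi(-1)=1$, the pairing $n\leftrightarrow\Delta-n$ forces $\sum_{n=1}^\Delta n\chi(n)=0$, and non-triviality of $\chi$ gives $\sum_{n=1}^\Delta\chi(n)=0$; only the quadratic term of $B_2$ survives, so $B_{2,\chi}=\Delta^{-1}\sum_{n=1}^\Delta n^2\chi(n)$. Assembling the constants yields $|\Gamma_\mathcal{O}\backslash\mathbb{H}^2|=\frac{\pi^2}{3\Delta}\sum_{n=1}^\Delta n^2\chi(n)$.

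For the index, the defining congruences $a+d,b+c\in 2\mathcal{O}$ say precisely that the reduction $\bar\gamma\in\textrm{SL}_2(\mathcal{O}/2\mathcal{O})$ has the symmetric shape $\left(\begin{smallmatrix}a&b\\b&a\end{smallmatrix}\right)$. Hence $M$ is the full preimage of the subgroup $\overline M\le\textrm{SL}_2(\mathcal{O}/2\mathcal{O})$ of such matrices with $a^2-b^2=1$; using surjectivity of the reduction map and $-I\in M$, one has $[\Gamma_\mathcal{O}:\Gamma]=[\textrm{SL}_2(\mathcal{O}):M]=|\textrm{SL}_2(\mathcal{O}/2\mathcal{O})|/|\overline M|$. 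In characteristic two the determinant condition collapses to $(a+b)^2=1$, so $|\overline M|=|\mathcal{O}/2\mathcal{O}|\cdot\#\{s\in\mathcal{O}/2\mathcal{O}:s^2=1\}$, the first factor counting the free parameter $a$ and the second counting the admissible values of $a+b$. A three-way case split on the behaviour of $2$ in $\mathcal{O}$ closes the count: if $2$ splits ($\chi(2)=1$) then $\mathcal{O}/2\mathcal{O}\cong\F_2\times\F_2$ with $|\textrm{SL}_2|=36$ and a unique square root of~$1$, giving index $9$; if $2$ is inert ($\chi(2)=-1$) then $\mathcal{O}/2\mathcal{O}\cong\F_4$ with $|\textrm{SL}_2|=60$ and again one square root, giving index $15$; if $2$ ramifies ($\chi(2)=0$, which forces $4\mid\Delta$ and hence $\chi(4)=0$) then $\mathcal{O}/2\mathcal{O}\cong\F_2[\epsilon]/(\epsilon^2)$ with $|\textrm{SL}_2|=48$ and two square roots $\{1,1+\epsilon\}$ (Frobenius is no longer injective), giving index $6$. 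In all three cases $3\bigl(2-\chi(2)+2\chi(4)\bigr)$ produces the same value.

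The delicate point is the normalization constant in Siegel's formula; the factor $8\pi^2$ only arises after passing from Siegel's own measure to the hyperbolic one, and it is easy to lose a power of $\pi$ or a factor of $2$ if the conventions are not fixed carefully. The congruence side is routine case analysis. Multiplying the two evaluated factors then gives the proposition at once.
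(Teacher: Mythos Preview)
Your argument is correct, and the volume computation via Siegel's formula together with the Bernoulli--polynomial evaluation of $L(-1,\chi)$ is essentially the paper's own route (the paper starts from $\zeta_D(2)$ and applies the functional equation, while you quote $\zeta_K(-1)$ directly, but the content is the same).

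Where you genuinely diverge is in the index computation. The paper obtains $[\Gamma_\mathcal{O}:\Gamma]$ as a corollary of a much longer result (Proposition~\ref{cosets}) that produces an explicit list of coset representatives, by a somewhat laborious case analysis on words in the generators $S,T_1,T_\omega$ of $\Gamma_\mathcal{O}$ (with a separate argument via conjugation to $\Gamma_0(2\mathcal{O})$ when $8\nmid D-5$). Your approach---observing that $M$ is the full preimage of the ``symmetric'' subgroup of $\textrm{SL}_2(\mathcal{O}/2\mathcal{O})$, collapsing the determinant to $(a+b)^2=1$ in characteristic~$2$, and reading off the index from $|\textrm{SL}_2(\mathcal{O}/2\mathcal{O})|$ according to the splitting type of $2$---is considerably shorter and more conceptual. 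What the paper's longer argument buys is the explicit coset representatives themselves, which are used later in the numerical appendix; your method gives only the index. One small point you could make more explicit is the surjectivity of $\textrm{SL}_2(\mathcal{O})\to\textrm{SL}_2(\mathcal{O}/2\mathcal{O})$, which for a Dedekind domain follows from an elementary lifting argument (or strong approximation).
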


{To prove it we are going to use an old result due to Siegel \cite{siegel2} and the computation of} the index $[\Gamma_\mathcal{O}:\Gamma]$. We give indeed an explicit description of the representatives of the subgroups.

The elements of the full Hilbert modular group and its subgroups are pairs of matrices related by the real conjugation. This redundant presentation is important when the action on $\mathbb{H}^2$ is considered but from the group theoretical point of view we get an isomorphic group dropping the second matrix in the pair. For the sake of brevity, in the {next result and in the} rest of the section we identify $(\gamma, \gamma^\sigma)$ and $\gamma$ when the action on $\mathbb{H}^2$ is irrelevant.

\begin{proposition}\label{cosets}
 Define
 \[
  T_u=
  \begin{pmatrix}
   1& u
   \\
   0&1
  \end{pmatrix},
  \ \ \ 
  S=
  \begin{pmatrix}
   0& 1
   \\
   -1&0
  \end{pmatrix},
  \ \ \ 
  \omega = \frac{1+\sqrt{D}}{2},
  \ \ \ 
  \overline{\omega} = \frac{1-\sqrt{D}}{2},
  \ \ \ 
  \eta=
  \begin{cases}
   1+\sqrt{D} &\text{if } 4\mid D-3,
   \\
   \sqrt{D} &\text{if } 4\mid D-2
  \end{cases}
 \]
 and 
 \[
  C_1=\{\textrm{\rm Id}\}\cup\{T_u\}_{u\in \Omega},
  \quad
  C_2=\{ST_u\}_{u\in \Omega},
  \quad
  C_3=\{T_uST_v\}^{u\in \Omega}_{v\in \Omega^*}
  ,\quad
  {C_4}=\{ST_vST_v\}_{v\in \Omega^*}
 \]
 where 
 $\Omega =\{1,\omega,\overline{\omega}\}$
 and 
 $\Omega^* =\{\omega,\overline{\omega}\}$.
 Then a complete set of representatives of the cosets~$\Gamma_{\mathcal{O}}/\Gamma$ is given by
 \[
  \mathcal{R}_D
  =
  \begin{cases}
   \big\{\text{\rm Id}, T_1,T_\eta, T_{\eta+1},ST_1, ST_{\eta+1}\big\}
   &\text{if }4\nmid D-1,
   \\
   C_1\cup C_2\cup \big\{T_1ST_{\omega}, T_1ST_{\overline{\omega}}\big\}
   &\text{if }8\mid D-1,
   \\
   C_1\cup C_2\cup C_3\cup C_4
   &\text{if }8\mid D-5.
  \end{cases}
 \]
\end{proposition}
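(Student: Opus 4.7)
My plan is to reduce everything modulo $2\mathcal{O}$. Let $R=\mathcal{O}/2\mathcal{O}$ and let $\pi:\textrm{SL}_2(\mathcal{O})\to\textrm{SL}_2(R)$ be the natural reduction, which is surjective since $\mathcal{O}$ is a Dedekind domain. By definition $M=\pi^{-1}(H)$ for
\[
 H=\Bigl\{\begin{pmatrix} a & b\\ c & d\end{pmatrix}\in\textrm{SL}_2(R):a+d=0=b+c\Bigr\},
\]
and since $-\textrm{Id}\in M$ the quotient by $\{\pm\textrm{Id}\}$ is harmless, so $[\Gamma_{\mathcal{O}}:\Gamma]=[\textrm{SL}_2(R):H]$. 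This reduces the problem to a finite computation in the four-element ring $R$.

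I then classify $R$ according to how $2$ decomposes in $\mathcal{O}$: when $4\nmid D-1$ the prime $2$ ramifies and $R\cong\F_2[\eta]/(\eta^2)$ with $\eta$ as in the statement; when $8\mid D-1$ it splits and $R\cong\F_2\times\F_2$ with $\omega,\overline{\omega}$ mapping to the two primitive idempotents; when $8\mid D-5$ it is inert and $R\cong\F_4$ with $\omega$ a generator. Because $R$ has characteristic~$2$, the equations defining $H$ collapse to $d=a$, $c=b$ and $(a+b)^2=1$, so $|H|=|R|\cdot\#\{x\in R:x^2=1\}$. Inspection of the three rings gives $|H|=8,\,4,\,4$, while standard formulas yield $|\textrm{SL}_2(R)|=48,\,36,\,60$ (lifting from the residue field in the two local cases and using multiplicativity in the split one). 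The resulting indices $6,\,9,\,15$ match the cardinalities of $\mathcal{R}_D$ in the three branches of the statement.

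It then suffices to check that the listed elements of $\mathcal{R}_D$ lie in pairwise distinct cosets of $M$, since the count will automatically force $\mathcal{R}_D$ to be a complete set. Equivalently, one has to verify $\pi(\gamma_1^{-1}\gamma_2)\notin H$ for distinct $\gamma_1,\gamma_2\in\mathcal{R}_D$. For two translations this is immediate: $T_u^{-1}T_{u'}=T_{u'-u}$ belongs to $H$ only when $u'-u\in 2\mathcal{O}$, which fails between any two distinct translation parameters in use because they project to distinct elements of $R$. The pairs involving one or two occurrences of $S$ are handled by direct $2\times 2$ multiplication modulo~$2$, using the identity $\eta^2\in 2\mathcal{O}$ in the ramified case and the relations $\omega+\overline{\omega}=1$, $\omega\overline{\omega}=(1-D)/4$ in the split and inert cases.

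The main obstacle is the case $8\mid D-5$, where $\mathcal{R}_D=C_1\cup C_2\cup C_3\cup C_4$ has $15$ elements and the pairwise verification is the most laborious. The computations become manageable by exploiting the Galois symmetry $\omega\leftrightarrow\overline{\omega}$ on $\F_4$ and by tabulating once and for all the images in $\textrm{SL}_2(\F_4)$ of the key products $T_uST_v$ and $(ST_v)^2$, which shows in particular why $v=1$ must be excluded from $\Omega^*$ in $C_3$ and $C_4$.
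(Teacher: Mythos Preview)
Your argument is correct and takes a genuinely different route from the paper. The paper never reduces modulo $2\mathcal{O}$ globally; instead it splits into two sub-proofs. For $8\nmid D-5$ it proves an auxiliary lemma that $\Gamma$ is conjugate to the congruence subgroup $\Gamma_0(2\mathcal{O})$ via $C=ST_1$, and then verifies completeness of $C\mathcal{R}_DC^{-1}$ using the criterion that two matrices share a left $\Gamma_0(2\mathcal{O})$-coset iff the determinant of their first columns lies in $2\mathcal{O}$. For $8\mid D-5$ this conjugation fails, and the paper resorts to Vaser\v{s}te\u{\i}n's theorem that $\Gamma_{\mathcal{O}}$ is generated by $S,T_1,T_\omega$, followed by an explicit word-reduction scheme showing that every length-$5$ word collapses to a shorter one modulo $\Gamma$.

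Your approach is cleaner and uniform across the three cases: once the surjectivity of $\pi:\textrm{SL}_2(\mathcal{O})\to\textrm{SL}_2(R)$ is granted (and your justification via elementary generation of $\textrm{SL}_2$ over the Artinian quotient is sound), the index drops out of a short count and only the pairwise-inequivalence check remains. The paper's approach, by contrast, avoids invoking surjectivity of reduction and instead establishes completeness constructively, which is more laborious but has the side benefit (which the authors note) of being adaptable to other subgroups where the congruence description may be less transparent. Your method would likewise extend to any congruence subgroup, so the trade-off is really one of taste: you import one standard structural fact and get a short uniform proof; the paper stays self-contained at the cost of two separate and longer arguments.
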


{Hence the index in these three cases is respectively, 6, 9 and 15. This can be written in an artificial but compact way with the Kronecker symbol.}

\begin{corollary}\label{c_index}
 We have 
 $
  [\Gamma_{\mathcal{O}}:\Gamma]
  =
  6-3\chi(2)+6\chi(4)
 $
 with $\chi$ as in Proposition~\ref{pvolume}.
\end{corollary}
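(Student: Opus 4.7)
The plan is a direct case-by-case verification: read off $|\mathcal{R}_D|$ from Proposition \ref{cosets}, compute $\chi(2)$ and $\chi(4)$ according to the congruence class of $D$ modulo $8$, and check that the two sides agree in each of the three cases.

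First I would tally the representatives. Since $|\Omega|=3$ and $|\Omega^*|=2$, one has $|C_1|=4$, $|C_2|=3$, $|C_3|=6$, $|C_4|=2$. Thus Proposition \ref{cosets} gives $|\mathcal{R}_D|=6$ when $4\nmid D-1$, $|\mathcal{R}_D|=4+3+2=9$ when $8\mid D-1$, and $|\mathcal{R}_D|=4+3+6+2=15$ when $8\mid D-5$.

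Next I would evaluate the Kronecker symbol $\chi(n)=\bigl(\frac{\Delta}{n}\bigr)$ at $n=2$ and $n=4$ in each case. When $4\nmid D-1$ the discriminant $\Delta=4D$ is even, hence $\chi(2)=\chi(4)=0$. When $\Delta=D$ is odd the standard identity $\bigl(\tfrac{D}{2}\bigr)=(-1)^{(D^2-1)/8}$ gives $\chi(2)=+1$ if $8\mid D-1$ and $\chi(2)=-1$ if $8\mid D-5$; in both subcases $\chi(4)=\chi(2)^2=1$.

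Finally, substituting into $6-3\chi(2)+6\chi(4)$ yields $6$, $6-3+6=9$, and $6+3+6=15$ in the three cases, matching the counts above. There is no real obstacle here; the content of the corollary is entirely in Proposition \ref{cosets}, and the only bookkeeping required is the evaluation of the Kronecker symbol at $2$, which is textbook material.
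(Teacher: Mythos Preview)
Your proposal is correct and follows exactly the approach the paper intends: the paper itself merely remarks, just before stating the corollary, that counting $\mathcal{R}_D$ gives indices $6$, $9$, $15$ in the three cases and that this ``can be written in an artificial but compact way with the Kronecker symbol.'' You have simply spelled out that bookkeeping in full, including the evaluation of $\chi(2)$ and $\chi(4)$, which the paper leaves to the reader.
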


{We divide the proof of Proposition~\ref{cosets} according whether $8$ divides $D-5$ or not.  In this second case we benefit from a simple description  of the group $\Gamma$ given in the following result which allows a substantial reduction in the computations. In its proof and in that of Proposition~\ref{cosets} we will use}
that for $4\nmid D-1$, we have $\eta^2\in 2\mathcal{O}$ and {when writing} each $x\in \mathcal{O}$ as $x=x_1+x_2\eta$, $x_1,x_2\in\Z$ we have $x\in 2\mathcal{O}$ if and only if $x_1$ and $x_2$ are even. 

\begin{lemma}\label{conjugation}
 If $8\nmid D-5$ we have $\Gamma =C^{-1}\Gamma_0(2\mathcal{O})C$ where $C=ST_1$
 and $\Gamma_0(2\mathcal{O})$ is the subgroup of matrices in $\Gamma_{\mathcal{O}}$ with lower left entry in $2\mathcal{O}$. 
 Moreover, two matrices in $\Gamma_{\mathcal{O}}$ belong to the same left coset of $\Gamma_0(2\mathcal{O})$ if and only if the determinant of the matrix formed by their first columns belongs to $2\mathcal{O}$.
\end{lemma}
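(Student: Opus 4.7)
The plan is to verify both assertions by direct matrix computation, with the only delicate input being the structure of the quotient ring $R:=\mathcal{O}/2\mathcal{O}$.

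First I would compute $C\gamma C^{-1}$ for a generic $\gamma=\begin{pmatrix}a&b\\c&d\end{pmatrix}\in\Gamma_\mathcal{O}$ with $C=ST_1=\begin{pmatrix}0&1\\-1&-1\end{pmatrix}$. A short matrix multiplication gives $C\gamma C^{-1}=\begin{pmatrix}d-c&-c\\a+c-b-d&a+c\end{pmatrix}$. Since $a+c-b-d\equiv (a+d)-(b+c)\pmod{2\mathcal{O}}$, this yields the equivalence
\[
 C\gamma C^{-1}\in\Gamma_0(2\mathcal{O})
 \iff
 (a+d)\equiv (b+c)\pmod{2\mathcal{O}}.
\]
The inclusion $\Gamma\subseteq C^{-1}\Gamma_0(2\mathcal{O})C$ is then immediate, since every $\gamma\in M$ satisfies both $a+d\in 2\mathcal{O}$ and $b+c\in 2\mathcal{O}$ and the conditions descend cleanly through the quotient by $\{\pm\textrm{Id}\}$.

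For the reverse inclusion, I would show that the weaker condition $(a+d)\equiv (b+c)\pmod{2\mathcal{O}}$, combined with $ad-bc=1$, in fact forces both $a+d$ and $b+c$ to lie in $2\mathcal{O}$. Working inside $R$, let $t$ be the common residue of $a+d$ and $b+c$ and put $s=a+b$. Then $d\equiv a+t$ and $c\equiv b+t$ modulo $2\mathcal{O}$, so the determinant condition becomes
\[
 1\equiv ad-bc\equiv a(a+t)-b(b+t)\equiv (a+b)(a+b+t)\equiv s(s+t)\pmod{2\mathcal{O}},
\]
using characteristic $2$ in the penultimate step. Under the hypothesis $8\nmid D-5$ the ring $R$ is either $\mathbb{F}_2[\epsilon]/(\epsilon^2)$ (when $4\nmid D-1$, noting whether $D$ is even or odd) or $\mathbb{F}_2\times\mathbb{F}_2$ (when $8\mid D-1$, where the polynomial $x^2-x-(D-1)/4$ splits). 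A finite enumeration of the four elements of $R$ in each case shows that $s(s+t)=1$ forces $t=0$, as required. The excluded case $8\mid D-5$ gives $R\cong\mathbb{F}_4$, where $s=\omega$, $t=1$ already satisfies $s(s+t)=1$, confirming that the hypothesis $8\nmid D-5$ is necessary.

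Finally, for the second statement, I would note that $\gamma_1$ and $\gamma_2$ lie in the same left coset of $\Gamma_0(2\mathcal{O})$ iff $\gamma_1^{-1}\gamma_2\in\Gamma_0(2\mathcal{O})$, and a one-line computation gives the lower-left entry of $\gamma_1^{-1}\gamma_2$ as $a_1c_2-c_1a_2=\det\begin{pmatrix}a_1&a_2\\c_1&c_2\end{pmatrix}$, which is exactly the stated determinant of first columns. The main obstacle is the algebraic case analysis in $R=\mathcal{O}/2\mathcal{O}$: one must correctly identify $R$ in each congruence class of $D$ modulo $8$, verify the equation $s^2+ts=1$ there, and observe the failure in $\mathbb{F}_4$ that explains the hypothesis.
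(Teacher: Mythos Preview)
Your proof is correct and follows essentially the same route as the paper: compute $C\gamma C^{-1}$ to get the lower-left entry $a+c-b-d$, reduce the reverse inclusion to a statement in $R=\mathcal{O}/2\mathcal{O}$, and handle the coset criterion via the lower-left entry of $\gamma_1^{-1}\gamma_2$. The one noteworthy difference is in the case analysis: the paper writes $a=a_1+a_2\omega$ (or $a_1+a_2\eta$) and argues by explicit integer parities, whereas you identify $R$ structurally as $\mathbb{F}_2[\epsilon]/(\epsilon^2)$, $\mathbb{F}_2\times\mathbb{F}_2$, or $\mathbb{F}_4$ and check the equation $s(s+t)=1$ directly---this is the same underlying fact (units in $R$ square to $1$ unless $R\cong\mathbb{F}_4$) but your packaging makes the role of the hypothesis $8\nmid D-5$ more transparent.
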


\begin{proof}
Note the computations
\[
 C
 \begin{pmatrix}
  a&b\\ c&d
 \end{pmatrix}
 C^{-1}
 =
 \begin{pmatrix}
  *&*\\ a+c-b-d&*
 \end{pmatrix}
 \quad\text{and}\quad
 C^{-1}
 \begin{pmatrix}
  a&b\\ 2c&d
 \end{pmatrix}
 C
 =
 \begin{pmatrix}
  b+d& -a+b-2c+d\\ -b& a-b
 \end{pmatrix}.
\]
The first one shows 
$\Gamma \subset C^{-1}\Gamma_0(2\mathcal{O})C$
because if $a+d, b+c\in 2\mathcal{O}$
then $a+c-b-d\in 2\mathcal{O}$.

The second computation shows that $\Gamma \supset C^{-1}\Gamma_0(2\mathcal{O})C$, it reduces 
to check that 
$ad-1\in 2\mathcal{O}$
implies 
$a+d\in 2\mathcal{O}$.
We consider two cases depending whether or not $8$ divides $D-1$.

If $8\mid D-1$, write $a=a_1+a_2\omega$ and $d=d_1+d_2\omega$ and note 
$\omega^2-\omega\in 2\mathcal{O}$.
Expanding $ad$ we get that 
$ad-1\in 2\mathcal{O}$
if and only if $2\nmid a_1d_1$ and $2\mid a_1d_2+a_2d_1+a_2d_2$
or equivalently if 
$a_1$, $d_1$ are both odd and $a_2$, $d_2$ are both even. Hence  $a+d\in 2\mathcal{O}$.

If $8\nmid D-1$, write $a=a_1+a_2\eta$ and $d=d_1+d_2\eta$.
Expanding
$ad$ we see that
$ad-1\in 2\mathcal{O}$
implies 
$2\nmid a_1d_1+a_2d_2$
and
$2\mid a_1d_2+a_2d_1$
in particular $a_1$ and $a_2$ have different parity.
In fact we can assume $2\nmid a_1$ (by the symmetry 
$a_1\leftrightarrow a_2$, $d_1\leftrightarrow d_2$) 
then $2\mid a_2$ and we conclude $2\nmid d_1$, $2\mid d_2$ that gives 
$a+d\in 2\mathcal{O}$ as expected.

Finally, $\gamma_1,\gamma_2\in \Gamma_{\mathcal{O}}$ belong to the same coset if and only  if $\gamma_2^{-1}\gamma_1\in \Gamma_0(2\mathcal{O})$
and the last part of assertion in the result reduces to write the formula for the lower left entry of this product.
\end{proof}

\begin{proof}[of Proposition~\ref{cosets} for $8\nmid D-5$]
 We check first that different elements in  $\mathcal{R}_D$ represent different cosets. 
 A calculation shows that $T_u^{-1}ST_v\in \Gamma$ for $u,v\in\mathcal{O}$ if and only if $u-v, v^2\in 2\mathcal{O}$. Then $T_u$ and $ST_v$ are in different cosets when $v\in \{1,\eta+1\}$ if $4\nmid D-1$ and when $v\in \{1,\omega,\overline{\omega}\}$ if $8\mid D-1$. 
 Clearly $T_u$ and $T_v$ belong to different cosets when 
 $u-v\not\in 2\mathcal{O}$, since $T_u^{-1}T_v=T_{v-u}$, and the same applies to $ST_u$ and $ST_v$.
 It only remains to check that in the case  $8\mid D-1$ the elements 
 $T_1ST_\omega$
 and
 $T_1ST_{\overline{\omega}}$
 do not share coset with the other elements. 
 As $T_u$, $ST_\omega$ and $ST_{\overline{\omega}}$ are in different cosets, the same holds for $T_u$ and $T_1ST_v$ with $v\in\{\omega, \overline{\omega}\}$. 
 Writing $u=a+bv$ and using $v^2-v\in 2\mathcal{O}$, after a calculation  $(ST_u)^{-1}T_1ST_v\in \Gamma$ imposes  $2\mid a+1$, $2\mid b+1$ and $2\mid a+b+1$
 which leads to a contradiction.

 \medskip
 
 We focus firstly on the case $4\nmid D-1$. 
 By Lemma~\ref{conjugation} we have to prove that for each element in $\Gamma_\mathcal{O}$ 
 there is an element in $C\mathcal{R}_DC^{-1}$ belonging to the same coset of $\Gamma_0(2\mathcal{O})$. 
 The first column of the matrices in $C\mathcal{R}_DC^{-1}$ is given by the following vectors, except for adding to the coordinates elements of $2\mathcal{O}$,
 \[
  \text{\rm Id}\to 
  \begin{pmatrix}
   1
   \\
   0
  \end{pmatrix}
  ,\ \ 
  T_1\to 
  \begin{pmatrix}
   1
   \\
   1
  \end{pmatrix}
  ,\ \ 
  T_\eta\to 
  \begin{pmatrix}
   1
   \\
   \eta
  \end{pmatrix}
  ,\ \ 
  T_{\eta+1}\to 
  \begin{pmatrix}
   1
   \\
   \eta+1
  \end{pmatrix}
  ,\ \ 
  ST_{1}\to 
  \begin{pmatrix}
   0
   \\
   1
  \end{pmatrix}
  ,\ \ 
  ST_{\eta+1}\to 
  \begin{pmatrix}
   \eta
   \\
   \eta+1
  \end{pmatrix}.
 \]
 With the criterion given at the end of Lemma~\ref{conjugation} it is enough to prove that if we add to these vectors a column with elements $a,b\in\mathcal{O}$ at least one of the corresponding determinants is in $2\mathcal{O}$. 
 Clearly we can assume $a,b\in \{0,1,\eta,\eta+1\}$.
 Note that the vectors corresponding to $\text{\rm Id}$ and $ST_1$ take care of all the cases with $a$ or $b$ zero. Then there are nine cases to be considered. Three of them have $a=b$ and the determinant with the second vector is zero. By the same reason, we can also disregard the three cases  in which $a$ and $b$ form the vectors corresponding to $T_\eta$, $T_{\eta+1}$ and $ST_{\eta+1}$.  The remaining cases are $(a,b)=(\eta+1,\eta)$, $(\eta+1,1)$ and $(\eta,1)$ and we get determinants in $2\mathcal{O}$ using the vectors corresponding respectively to $T_\eta$, $T_{\eta+1}$ and $ST_{\eta+1}$.
 
 \medskip
 
 We deal now with the case {$8\mid D-1$}. 
 To prove that for each element in $\Gamma_\mathcal{O}$ 
 there exists an element in $C\mathcal{R}_DC^{-1}$ in the same coset of $\Gamma_0(2\mathcal{O})$ we proceed as before. This case is much simpler and no calculations are needed because after excluding the cases $b=0$, $a=0$ and $a=b$ using as above respectively $\text{\rm Id}$, $ST_1$ and $T_1$, we have only six possibilities with  $a,b\in \{1,\omega,\overline{\omega}\}$ and they are all covered since each remaining element in $C\mathcal{R}_DC^{-1}$ treats at least the case corresponding to its own first column. 
\end{proof}

If $8\mid D-5$ the group $\Gamma$ is not a conjugate of $\Gamma_0(2\mathcal{O})$. In this case, and actually also when $8\mid D-1$, there is a simple set of generators of $\Gamma_{\mathcal{O}}$. Considering some relations among them it is possible to simplify any word to one of the representatives indicated below multiplied by an element of $\Gamma$. The drawback of this method is that it leads to distinguish a number of cases that require somewhat tedious calculations.
{ The advantage is that it gives a unified treatment of the case $4\mid D-1$ (see the remarks after the proof) and it potentially works for other subgroups.}

\begin{proof}[of Proposition~\ref{cosets} for $8\mid D-5$]
 A result due to Vaser\v{s}te\u{\i}n \cite{vaserstein}  (see also \cite{liehl}, \cite[\S1.2.2]{mayer} and \cite[\S5.1]{everhart})
 assures that the group $\Gamma_\mathcal{O}$ is generated by $S$, $T_1$ and $T_\omega$. 
 We note that $S^2 =-\textrm{Id}$ and $S\in \Gamma$. It is clear that, modulo $\Gamma$ we can write any element $g\in \Gamma_{\mathcal O}$ as an alternating product of factors equal to  $S$ and $T_u$ with $u \in  
{\mathcal O}$, that we will call generically a word. We employ the usual notation $g_1\sim g_2$, or $g_1$ and $g_2$ equivalent modulo $\Gamma$, to mean that $g_1,g_2\in \Gamma_{\mathcal{O}}$ belong to the same left coset i.e., $g_2^{-1}g_1\in \Gamma$. 
Note that $gS\sim g$ and then we can always consider words with $T_u\not\in\Gamma$ to the right.

 By the multilinear properties of the multiplication of matrices, replacing in a product $T_u$ by $T_{u+w}$ with $w$ in $2\mathcal{O}$  changes the entries of the final result in elements in this ideal. Recalling the definition of $\Gamma$, it is easy to see that two matrices in $\Gamma_{\mathcal{O}}$ with entries differing in  elements of $2\mathcal{O}$ belong to the same coset. Hence it is enough to consider products involving translations $T_u$ with $u\in \Omega$ because for any $u\in \mathcal{O}$ we can find $w\in 2\mathcal{O}$  such that $u-w\in \{0\}\cup\Omega$.
 Note that any word of length one is equivalent to exactly a one element of $C_1$.
 
 \medskip
 
 A calculation shows
 \begin{equation}\label{mrel1}
  T_v^{-1}ST_uST_v =
  \begin{pmatrix}
  -1-uv& -uv^2\\u& uv-1 
  \end{pmatrix}.
 \end{equation}
 For $v=1$ it lies in $\Gamma$. This implies that for any $g \in \Gamma_{\mathcal O}$, 
 \begin{equation}\label{red1}
  gT_uST_1 = gST_1 (T_{-1}ST_uST_1) \sim gST_1.
 \end{equation}
 Hence any element of the  form $gT_1\not\in\Gamma$ is equivalent to $T_1$ or $ST_1$.
 
 The set $C_2$ contains representatives of all the words of length 2 and they are clear nonequivalent. We have
 \begin{equation}\label{mrel2}
  T_uST_v =
  \begin{pmatrix}
  -u& 1-uv\\ -1& -v 
  \end{pmatrix}
 \end{equation}
 and it is easy to deduce that $C_1\cup C_2$ does not contain equivalent elements. As shown before, the words $T_uST_v$ can be simplified if $v=1$, hence they are all equivalent to some element in  $C_1\cup C_2\cup C_3$.  Noticing
 \begin{equation}\label{om_sq}
  \omega\overline{\omega}-1\in 2\mathcal{O}
  \qquad\text{and}\qquad
  \omega^2-\overline{\omega}\in 2\mathcal{O},
 \end{equation}
 it follows from \eqref{mrel2} that $C_1\cup C_3$ does not contain equivalent elements and by \eqref{mrel1} (recall $S^{-1}=-S$) any element in $C_2$ is not equivalent to an element in $C_3$. 
 
 For the words of length four $ST_uST_v$ we could consider in principle $u\in\Omega$, $v\in\Omega^*$ which makes six possibilities but some calculations show that $(T_{\overline{\omega}}S T_{\omega})^{-1}
  ST_{\overline{\omega}}S T_{\omega}$ and 
  $(T_{1}S T_{\overline{\omega}})^{-1}
  ST_{1}S T_{\omega}$ are respectively
 \[
  \begin{pmatrix}
   \omega +\overline{\omega}(\omega \overline{\omega}-1)
   &
   \omega^2+(\omega \overline{\omega}-1)^2
   \\
   -1-\overline{\omega}^2
   &
   -\omega -\overline{\omega}(\omega \overline{\omega}-1)
  \end{pmatrix}
  \quad\text{and}\quad
  \begin{pmatrix}
   2\overline{\omega}-1
   &
   2\omega\overline{\omega}-\omega-\overline{\omega}+1
   \\
   -2
   &
   1-2\omega
  \end{pmatrix},
 \]
 that belong to $\Gamma$ using \eqref{om_sq}. Taking conjugates we deduce
 \begin{equation}\label{mrel3}
  ST_{\overline{\omega}}S T_{\omega} \sim T_{\overline{\omega}}S T_{\omega},
  \quad
  ST_{{\omega}}S T_{\overline{\omega}} \sim T_{{\omega}}S T_{\overline{\omega}},
  \quad
  ST_{1}S T_{{\omega}} \sim T_{1}S T_{\overline{\omega}},
  \quad
  ST_{1}S T_{\overline{\omega}} \sim T_{1}S T_{{\omega}}.
 \end{equation}
 Then the only possibilities for elements not equivalent to those in $C_1\cup C_2\cup C_3$ are the ones considered in $C_4$.
 
 \medskip
 
 Assuming that the elements in $\mathcal{R}_D$ are not equivalent, we are going to prove that they form a complete set of representatives. This follows by an inductive argument if we prove that the words with length~5 are equivalent to shorter words because we have seen that the words of length at most~4 are equivalent to elements in $\mathcal{R}_D$. By \eqref{mrel3} and \eqref{red1} it is enough to consider words with 
 $ST_\omega ST_\omega$
 or
 $ST_{\overline{\omega}} ST_{\overline{\omega}}$
 to the right.
 In fact, taking real conjugates if necessary, it is enough to prove that 
 $T_uST_\omega ST_\omega\sim  ST_\omega ST_\omega$, or equivalently
 \begin{equation}\label{mrel4}
  (ST_\omega ST_\omega)^{-1}
  T_uST_\omega ST_\omega
  =
   \begin{pmatrix}
   u(\omega^2-1)\omega +\omega-\omega^2+1 
   &
   u(\omega^2-1)^2
   \\
   -u\omega^2
   & 
   -u \omega (\omega^2-1)  + 1 
   \end{pmatrix}
   \in\Gamma.   
 \end{equation}
 Using \eqref{om_sq} we have that $\omega-\omega^2+1\in 2\mathcal{O}$ and then the trace of this matrix belongs to $2\mathcal{O}$. In the same way, $(\omega^2-1)^2-\omega^2 =\omega^4-3\omega^2+1$
 differs from $\omega-3\overline{\omega}+1\in 2\mathcal{O}$ in an element of $2\mathcal{O}$. Hence the matrix is in $\Gamma$. 
 
 \medskip
 
 It remains to prove that $\mathcal{R}_D$ does not contain equivalent elements. 
 Recall that we knew that this was the case for $C_1\cup C_2\cup C_3$.
 Note that
 $(ST_{\overline{\omega}} ST_{\overline{\omega}})^{-1}
  ST_\omega ST_\omega
  =T_{\overline{\omega}} ST_1ST_\omega$ 
  which is not in~$\Gamma$ by \eqref{mrel1}. 
  Then 
  $ST_\omega ST_\omega\not\sim ST_{\overline{\omega}} ST_{\overline{\omega}}$. By real conjugation we can restrict ourselves to check that $g= ST_\omega ST_\omega$ is not equivalent to any any element in
  $C_1\cup C_2\cup C_3$.
  From \eqref{mrel4}, $T_ug\sim g$ and then $g$ is not equivalent to any element in $C_1\cup C_3$. Clearly $h^{-1}g\in C_2\cup C_3$ for $h\in C_2$, then $g\not \sim h$ also in this case and the proof is complete. 
\end{proof}

{
As mentioned before, it is possible to extend the previous proof to cover the case $8\mid D-1$. We now sketch the alternative proof
following the same lines as in the case when $8| D-5$.

The calculation \eqref{mrel1} does not depend on the properties of $D$ and we can deduce as before that $gT_1$ is equivalent to $\textrm{Id}$, $T_1$ or $ST_1$. In fact \eqref{mrel1} implies
\begin{equation}\label{eq:k-even}
 T_uST_1 \sim ST_1,
 \qquad 
 T_\omega S T_\omega \sim ST_\omega
 \qquad\text{and}\qquad 
 T_{\overline{\omega}} S T_{\overline{\omega}} \sim ST_{\overline{\omega}}. 
\end{equation}
The two latter establish the main difference with the case $8\mid D-5$ and are due to the fact that \eqref{om_sq} must be replaced by 
$\omega\overline{\omega}\in 2\mathcal{O}$
and
$\omega^2-\overline{\omega}+1\in 2\mathcal{O}$
when $8\mid D-5$. Multiplying to the left by $T_1$ one deduces (recall $\omega+1-\overline{\omega}\in 2\mathcal{O}$)
\begin{equation}\label {eq:k-even2}
 T_{\overline{\omega}} S T_\omega \sim T_1 S T_\omega
 \qquad\text{and}\qquad 
 T_\omega S T_{\overline{\omega}} \sim T_1 S T_{\overline{\omega}}.
\end{equation}
Hence any word of length at most 3 is equivalent to one of the elements in $C_1\cup C_2\cup \big\{T_1ST_{\omega}, T_1ST_{\overline{\omega}}\big\}$. These elements are inequivalent (this can be obtained essentially following the scheme of the previous proof, where a bigger set is considered).

To conclude that it is a full set of representatives we have to prove that any word of length~4 can be reduced to one of its elements. By \eqref{eq:k-even} and \eqref{eq:k-even2} it is enough to consider 
$ST_1ST_\omega$
and
$ST_1ST_{\overline{\omega}}$.
The following calculation shows 
$ST_1ST_\omega \sim   T_1ST_{\overline{\omega}}$
\[
 (T_1ST_{\overline{\omega}})^{-1}ST_1ST_\omega
 =
 \begin{pmatrix}
  2\overline{\omega}-1
  &
  2\omega\overline{\omega}
  \\
  -2 & 1-2\omega
 \end{pmatrix}
\]
and by conjugation 
$ST_1ST_{\overline{\omega}} \sim   T_1ST_{\omega}$.

\medskip

Summarizing, the relations \eqref{mrel3} were not enough to exclude words of length~4 when $8\mid D-5$ while the two last formulas in \eqref{eq:k-even} cause the collapse of the potential new possibilities in the case~$8\mid D-1$.
}

\begin{proof}[of Proposition~\ref{pvolume}]
 Siegel proved a closed formula \cite[(19)]{siegel2} for $|\Gamma_\mathcal{O}\backslash\mathbb{H}^d|$ when $\mathcal{O}$ is the ring of integers of a totally real number field. In the quadratic case, it reads
 \begin{equation}\label{siegel_v}
  |\Gamma_\mathcal{O}\backslash\mathbb{H}^2|
  =
  \frac{2}{\pi^2}
  \Delta^{3/2}\zeta_D(2)
 \end{equation}
 where $\zeta_D$ is the Dedekind zeta function of $\Q(\sqrt{D})$. We have 
 $\zeta_D(2)=\zeta(2)L(2,\chi)=\frac 16 \pi^2L(2,\chi)$ with $L$ the Dirichlet $L$-function associated to the  character $\chi$.
 By the functional equation of $L$ in its asymmetric form 
 \cite[Cor.10.9]{MoVa}
 (note that $\chi$ is primitive and even)
 \[
  \zeta_D(2)=-\frac{1}{3}\pi^4\Delta^{-3/2}L(-1,\chi).
 \]
 By Proposition~\ref{cosets}, the volume of the fundamental region for $\Gamma$ is~$6-3\chi(2)+6\chi(4)$ times that for  $\Gamma_\mathcal{O}$, hence
 \[
  |\Gamma\backslash\mathbb{H}^2|
  =
  -2\pi^2\big(2-\chi(2)+2\chi(4)\big)L(-1,\chi).
 \]
 Now we appeal to the known formula \cite[Th.4.2]{washington} (as an aside, we note that  the evaluation of $L(1-n,\chi)$ plays an important role in the definition of $p$-adic $L$-functions)
 \[
  L(-1,\chi)
  =
  -\frac{\Delta}{2}
  \sum_{n=1}^\Delta
  \chi(n)B_2(n/\Delta)
  \qquad\text{with}\quad
  B_2(x)=x^2-x+\frac 16.
 \]
 As $\chi$ is even, 
 {\[
  \sum_{n =1}^\Delta n\chi(n)=\sum_{n=1}^\Delta (\Delta-n)\chi(n)
 \]}%
 and { the sum of both sides equal $\Delta \sum_{n=1}^\Delta \chi(n)=0$, hence both vanish}.
 Then we can replace $B_2(n/\Delta)$ by $(n/\Delta)^2$. 
\end{proof}

\section{Proof of the main results}\label{proof_main}

After Lemma~\ref{alemma} and Lemma~\ref{sp_th}, 
the proof of Theorem~\ref{mainth1} and Theorem~\ref{mainth2}  boils down to approximate sharp cuts by smooth kernels and estimating Selberg transforms. Following \cite{chamizo},  with an hyperbolic version of a  classical Euclidean device, it is possible to reduce the whole problem to estimate the Selberg transform of the characteristic function of an interval (which is done in \cite[Lemma~2.4]{chamizo}). 
With this idea in mind we write $\chi_V$ and $h_V$ respectively for the characteristic function of $[0,V]$ and its Selberg transform
\[
 \chi_V(x)=
 \begin{cases}
  1&\text{if }0\le x< V,
  \\
  0&\text{otherwise}
 \end{cases}
 \qquad\text{and}\qquad
 h_V(t)
 =
 \int_{\{u(z,i)\le V\}}
 y^{1/2+it}\; d\mu(z).
\]
The slow decay of $h_V$ would cause serious convergence problems when applying the spectral theorem. The idea introduced in \cite{chamizo} is to replace $\chi_V$ by a  manageable approximation in such a way that its Selberg transform is like a product of two $h_V$'s at different scales. It doubles the decay without requiring  to estimate new transforms.
The key argument is a simple one and it is  summarized in the following result.

\begin{lemma}\label{hyp_conv}
 For $0<v<V$ consider the function $F:\mathbb{H}^2\rightarrow\R$ given by
 \[
  F(z,w)
  =
  \frac{1}{4\pi v}
  \int_{\mathbb{H}}
  \chi_V\big(u(z,\zeta)\big)
  \chi_v\big(u(\zeta,w)\big)
  \; d\mu(\zeta).
 \]
 Then there exists $f:[0,\infty)\rightarrow\R$ such that 
 $F(z,w)=f\big(u(z,w)\big)$
 and 
 \[
  \chi_{V^-}
  \le f\le 
  \chi_{V^+}
  \qquad\text{where}\quad
  V^\pm
  =
  \big(
  \sqrt{V(1+v)}
  \pm
  \sqrt{v(1+V)}
  \big)^2.
 \]
 Moreover the Selberg transform of $f$ is $h_Vh_v$. 
\end{lemma}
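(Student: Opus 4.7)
The plan is to verify in order the three assertions of the lemma: that $F$ depends only on $u(z,w)$, the pointwise bounds $\chi_{V^-}\le f\le\chi_{V^+}$, and the Selberg transform identity. For the first, I would change variables $\zeta\mapsto\gamma(\zeta)$ in the defining integral for $F(\gamma(z),\gamma(w))$, $\gamma\in\textrm{PSL}_2(\R)$, and invoke the isometry invariance of $u$ and $d\mu$ to obtain $F(\gamma(z),\gamma(w))=F(z,w)$. Since $\textrm{PSL}_2(\R)$ acts transitively on pairs at a fixed hyperbolic distance, $F$ must be a function of $u(z,w)$ alone, which defines $f$.

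For the pointwise bounds the key is a geometric interpretation: $F(z,w)$ equals the hyperbolic area of the intersection of the two hyperbolic disks $D_V(z)=\{\zeta:u(z,\zeta)<V\}$ and $D_v(w)=\{\zeta:u(\zeta,w)<v\}$, divided by $4\pi v$. The classical formula $|D_\rho(p)|=2\pi(\cosh r-1)=4\pi\sinh^2(r/2)=4\pi\rho$, where the hyperbolic radius $r$ satisfies $\sinh^2(r/2)=\rho$, gives $0\le f\le 1$ immediately and identifies $4\pi v$ as the volume of the smaller disk. The intersection equals all of $D_v(w)$, hence $f=1$, precisely when $D_v(w)\subset D_V(z)$, which by the hyperbolic triangle inequality amounts to $\rho(z,w)\le\rho_V-\rho_v$ with $\sinh(\rho_V/2)=\sqrt V$, $\cosh(\rho_V/2)=\sqrt{1+V}$ (and analogously for $v$). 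The subtraction formula for $\sinh$ then yields
\[
\sinh^2\bigl((\rho_V-\rho_v)/2\bigr)=\bigl(\sqrt{V(1+v)}-\sqrt{v(1+V)}\bigr)^2=V^-,
\]
so $f=1$ on $[0,V^-)$. Symmetrically, the disks are disjoint exactly when $u(z,w)\ge V^+$, so $f=0$ on $[V^+,\infty)$, and the stated bounds follow.

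For the Selberg transform, I would use Fubini together with the identity
\[
\int_{\mathbb{H}}\chi_V\bigl(u(z,\zeta)\bigr)\Im(z)^{1/2+it}\,d\mu(z)=h_V(t)\,\Im(\zeta)^{1/2+it},
\]
which follows from the change of variable $z=\sigma(z')$ with $\sigma:w\mapsto\Re\zeta+(\Im\zeta)w$, the upper-triangular isometry sending $i$ to $\zeta$ that preserves $u$ and multiplies $\Im$ by $\Im(\zeta)$. Inserting this into $\int f(u(z,i))\Im(z)^{1/2+it}d\mu(z)$, pulling $h_V(t)$ out of the remaining integral, and recognizing what is left as $h_v(t)$ by definition delivers the claimed product once the normalizing factor $1/(4\pi v)$ in the definition of $F$ is accounted for. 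The main obstacle is the trigonometric identification of $\sinh^2((\rho_V\pm\rho_v)/2)$ with $V^\pm$; the rest is routine bookkeeping with isometry invariance and the definition of the Selberg transform.
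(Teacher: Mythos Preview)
Your proposal is correct and follows essentially the same route as the paper: both arguments deduce the existence of $f$ from isometry invariance, obtain the bounds $\chi_{V^-}\le f\le\chi_{V^+}$ via the hyperbolic triangle inequality together with the addition formulas for hyperbolic functions (you use the half-angle $\sinh$ identity, the paper uses the full-angle $\cosh$ addition formula, which is an equivalent computation), and establish the Selberg transform identity by Fubini combined with the eigenfunction relation $\int_{\mathbb H}\chi_V(u(z,\zeta))\,(\Im z)^{1/2+it}\,d\mu(z)=h_V(t)\,(\Im\zeta)^{1/2+it}$. One small remark: tracking the normalizing factor carefully, the Selberg transform of $f$ as defined comes out to $(4\pi v)^{-1}h_Vh_v$, which is indeed how it is used downstream in Proposition~\ref{NDker}.
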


\begin{proof}
 Note that $F(\gamma z,\gamma w)=F(z,w)$ for $\gamma\in\text{PSL}_2(\R)$ because $d\mu$ is the invariant measure, then $F(z,w)$ only depends on $\rho(z,w)$ and it assures the existence of $f$. 
 Using geodesic polar coordinates \cite[(1.17)]{iwaniec} it is plain 
 that $\int_{\mathbb{H}}
  \chi_v\big(u(\zeta,w)\big)
  \; d\mu(\zeta)
 =4\pi v$
 (the area of a hyperbolic circle) and hence $0\le f\le 1$. 
 
 Take $\widetilde{V}, \widetilde{v}>0$ such that $2V+1=\cosh \widetilde{V}$ and $2v+1=\cosh\widetilde{v}$. Then by \eqref{rhou} we can write
 \[
  f\big(u(z,i)\big)
  =
  \frac{1}{4\pi v^2}
  \int_{\mathbb{H}}
  \chi_{\widetilde{V}}\big(\rho(z,\zeta)\big)
  \chi_{\widetilde{v}}\big(\rho(\zeta,i)\big)
  \; d\mu(\zeta).
 \]
 By the triangle inequality, if $\rho(z,i)\ge \widetilde{V}+\widetilde{v}$ the integral vanishes. As $0\le f\le 1$, we can rewrite this as
 $f\big((\cosh x-1)/2\big)\le \chi_{\widetilde{V}+\widetilde{v}}(x)$
 for $x\ge 0$ that means $f\le \chi_{V^+}$ 
 with $2V^++1=\cosh(\widetilde{V}+\widetilde{v})$ and the addition formula for $\cosh$ gives the claimed formula for $V^+$.
 In the same way, if $\rho(z,i)\le \widetilde{V}-\widetilde{v}$ then $\rho(\zeta,i)<\widetilde{v}$ implies $\rho(z,\zeta)<\widetilde{V}$ and consequently we can omit $\chi_{\widetilde{V}}$ to get $f\big(u(z,i)\big)=1$. This can be rephrased as 
 $f\big((\cosh x-1)/2\big)\ge \chi_{\widetilde{V}-\widetilde{v}}(x)$
 for $x\ge 0$ and proceeding as before,  $f\ge \chi_{V^-}$.

 The last part of the statement reduces to an application of Fubini's theorem using that 
 \[
  \int_{\mathbb{H}}
  \chi_V\big(u(z,w)\big)
  (\Im z)^{1/2+it}
  \; d\mu(\zeta)
  =
  h_V(t)
  (\Im w)^{1/2+it}.
 \]
 See \cite[Lemma~2.2]{ChRaRu} for the details and a general result.
\end{proof}

Theorem~\ref{mainth1}
and
Theorem~\ref{mainth2}
will be proved by an application of  Lemma~\ref{sp_th}, for 
different choices of the parameters $v_1$ and $v_2$ in the following result.

\begin{proposition}\label{NDker}
 Let $V_j>0$ and $0<v_j<C\min(1,V_j)$, $j=1,2$ where $0<C<1$ is an absolute constant. Then there exists $V_j'>0$ with 
 $V_j'=V_j+O\big(V_jv_j^{1/2}+(V_jv_j)^{1/2}\big)$
 such that
 \[
  N_D(V_1,V_2)
  =
  2
  \sum_{(\gamma,\gamma^\sigma)\in\Gamma}
  k_1\big( u(\gamma i,i) \big)
  k_2\big( u(\gamma^\sigma i,i) \big)
 \]
 where the Selberg transform of $k_j$ is $(4\pi v_j)^{-1}h_{V_j'}h_{v_j}$.
\end{proposition}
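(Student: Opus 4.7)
The plan is to combine Lemma~\ref{alemma}, which realizes sums weighted by $r(\lambda)r(\lambda+1)$ as kernel sums over $\Gamma$, with Lemma~\ref{hyp_conv}, which supplies a smooth approximation of the characteristic function together with a factored Selberg transform, and then to choose the smoothing parameters $V_1',V_2'$ by an intermediate value argument so that the identity with $N_D(V_1,V_2)$ holds on the nose.

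For each candidate $V_j'>0$ close to $V_j$, I would let $k_j$ be the function $f$ furnished by Lemma~\ref{hyp_conv} applied with $(V,v)=(V_j',v_j)$; denote by $U_j^\pm=\big(\sqrt{V_j'(1+v_j)}\pm\sqrt{v_j(1+V_j')}\big)^2$ the associated parameters. Then $\chi_{U_j^-}\le k_j\le \chi_{U_j^+}$, and the Selberg transform of $k_j$ has the product form required by the proposition. Since $k(x,y)=k_1(x)k_2(y)$ is compactly supported, Lemma~\ref{conver} applies and Lemma~\ref{alemma} rewrites
\[
 2\sum_{(\gamma,\gamma^\sigma)\in\Gamma}k_1\big(u(\gamma i,i)\big)k_2\big(u(\gamma^\sigma i,i)\big)=\mathcal{N}_k=\sum_{\lambda\in\mathcal{O}}r(\lambda)r(\lambda+1)k_1(\lambda)k_2(\lambda^\sigma).
\]
Call this last expression $S(V_1',V_2')$. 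Only finitely many $\lambda$ contribute (compact support), and each $k_j$ depends continuously on $V_j'$ through the integral defining $F$ in Lemma~\ref{hyp_conv}, so $S$ is continuous in $(V_1',V_2')$.

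The key step will be an intermediate value argument in the two parameters. Nonnegativity of $r(\lambda)r(\lambda+1)$ combined with the sandwich yields
\[
 N_D(U_1^-,U_2^-)\le S(V_1',V_2')\le N_D(U_1^+,U_2^+).
\]
Let $V_{j,-}$ be the unique $V_j'<V_j$ for which $U_j^+=V_j$, and $V_{j,+}$ the unique $V_j'>V_j$ for which $U_j^-=V_j$. Then $S(V_{1,-},V_{2,-})\le N_D(V_1,V_2)\le S(V_{1,+},V_{2,+})$. Joining these two parameter points by the linear segment $t\mapsto\big((1-t)V_{1,-}+tV_{1,+},(1-t)V_{2,-}+tV_{2,+}\big)$ and applying IVT to the continuous restriction of $S$ produces a choice $(V_1',V_2')$ for which $S(V_1',V_2')=N_D(V_1,V_2)$ exactly.

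Finally, the selected $V_j'$ lies between $V_{j,-}$ and $V_{j,+}$, so $|V_j'-V_j|\le V_{j,+}-V_{j,-}$. Since $U_j^+-U_j^-=4\sqrt{V_j'v_j(1+V_j')(1+v_j)}$, this quantity is $\asymp V_j v_j^{1/2}+(V_j v_j)^{1/2}$ under the hypothesis $v_j<C\min(1,V_j)$, giving the stated $V_j'=V_j+O\big(V_j v_j^{1/2}+(V_j v_j)^{1/2}\big)$. The only nontrivial point is the continuity of $S$ in the smoothing parameters, which is immediate from compact support and dominated convergence applied to the finitely many nonzero terms.
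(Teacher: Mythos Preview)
Your approach is essentially the same as the paper's: sandwich $N_D(V_1,V_2)$ between two values of the smoothed kernel sum coming from Lemma~\ref{hyp_conv}, then apply the intermediate value theorem along a one-parameter path to hit $N_D(V_1,V_2)$ exactly. The paper parametrizes this path directly by a single $t\in[-1,1]$ via $V_j(t)=V_j+tC^{-1/4}V_jM_jv_j^{1/2}$ with $M_j=\max(1,V_j^{-1/2})$ and checks the sandwich by expanding $r_j^\pm,s_j^\pm$; you instead solve for the endpoints $V_{j,\pm}$ implicitly from $U_j^\pm=V_j$ and then join them linearly. Both routes yield the same conclusion.

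One small imprecision: in your last paragraph you invoke the identity $U_j^+-U_j^-=4\sqrt{V_j'v_j(1+V_j')(1+v_j)}$ to bound $V_{j,+}-V_{j,-}$, but that identity is a statement about the two $U$-values at a \emph{fixed} $V_j'$, not about the spread of the inverse images. What you actually need is that $U_j^\pm(V')=V'+O\big(V'v_j^{1/2}+(V'v_j)^{1/2}\big)$ together with monotonicity of $U_j^\pm$ in $V'$, which then gives $|V_{j,\pm}-V_j|=O\big(V_jv_j^{1/2}+(V_jv_j)^{1/2}\big)$ (after first checking $V_{j,+}\asymp V_j$, which follows since $v_j<C$ is small). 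This is routine and does not affect the validity of your argument.
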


{\sc Remark.}
The constant $C$ can be substituted by an explicit numerical value by following the steps in the proof, but this value is unimportant for our arguments, so we shall  not to get into this calculation. 

\begin{proof}
 Let us abbreviate $M_j = \max(1,V_j^{-1/2})$; then $0<C^{-1/2}v_j^{1/2}M_j<1$. 
 
 Take in Lemma~\ref{hyp_conv} $v=v_j$ and $V=V_j(t)$ with 
 $V_j(t)=V_j+tC^{-1/4}V_jM_jv_j^{1/2}$
 and $t\in [-1,1]$. 
 For $t$ in this interval, the hypothesis $0<v<V$ is satisfied choosing $C$ small enough because $C^{-1/4}M_jv_j^{1/2}\le C^{1/4}\to 0$ when $C\to 0^+$. 
 Let $f_{t_j}$ be the corresponding function $f$ in Lemma~\ref{hyp_conv}, which ensures that
 $f_{-1_j}\le\chi_{V_j}\le f_{1_j}$ under
 \[
  (r_j^+-s_j^+)^2\ge V_j
  \quad\text{and}\quad
  (r_j^-+s_j^-)^2\le V_j
  \qquad\text{with}\quad
  \begin{cases}
  r_j^\pm
  =
  \sqrt{V_j(\pm 1)(1+v_j)},
  \\
  s_j^\pm
  =
  \sqrt{v\big(1+V_j(\pm 1)\big)}.
  \end{cases}
 \]
 We have
 \[
  V_j^{-1/2}r_j^\pm
  =
  1\pm \frac 12 C^{-1/4}M_jv_j^{1/2}
  +O\big(
  C^{-1/2}M_j^2v_j
  +v_j
  \big)
  \quad\text{and}\quad
  V_j^{-1/2}s_j^\pm
  =
  O\big(
  M_jv_j^{1/2}
  \big)
 \]
hence the inequalities hold  choosing  $C$ small enough.
 
 Consequently, defining $K_t$ as the automorphic kernel \eqref{aut_ker} that corresponds to $k_t(x,y)=f_{t1}(x)f_{t2}(y)$, we have by Lemma~\ref{alemma}
 \[
  2K_{-1}(\mathbf{i},\mathbf{i})
  \le N_D(V_1,V_2)\le
  2K_{1}(\mathbf{i},\mathbf{i}).
 \]
 The intermediate value theorem implies $N_D(V_1,V_2) = 2 K_{t_0}(V_1,V_2)$ for some $t_0\in [-1,1]$ and the result follows with $V_j'=V_j(t_0)$.
\end{proof}

For the sake of completeness, we include here the estimates we need for the Selberg transform. A more precise result with asymptotic formulas is given in \cite{chamizo}. 

\begin{lemma}
 The Selberg transform $h_V$ of the characteristic function of $[0,V]$ satisfies for $t\ge 0$
 \begin{equation}\label{t_real}
  h_V(t)\ll 
  \begin{cases}
   V^{1/2}(1+t)^{-3/2} &\text{if }t\ge 1, \ V\ge 1
   \\
   V^{1/2}\log(2V) &\text{if }t\le 1, \ V\ge 1
  \end{cases}
  \quad\text{and}\qquad
  h_V(t)\ll 
  V(1+t^2V)^{-3/4} 
  \quad\text{if }V\le 1,
 \end{equation}
 and for $t$ pure imaginary $0<\Im t\le 1/2$ we have
 \begin{equation}\label{t_pure}
  h_V(t)\ll 
   V^{1/2+|t|}
   \min\big(|t|^{-1}, \log(2V)\big)
   \quad\text{if } \ V\ge 1
  \qquad\text{and}\qquad
  h_V(t)\ll 
  V 
  \quad\text{if }V\le 1.
 \end{equation}
\end{lemma}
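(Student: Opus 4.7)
My plan is to first bring $h_V(t)$ to a concrete one–variable form, and then estimate in each regime.

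The region $\{u(z,i)\le V\}$ in coordinates $z=x+iy$ is the Euclidean disk $x^2+(y-(1+2V))^2\le 4V(V+1)$, so integrating out $x$ and setting $y=e^u$ with $\cosh\widetilde V=1+2V$ (so $\widetilde V\asymp\log(2V)$ for $V\ge 1$ and $\widetilde V\asymp V^{1/2}$ for $V\le 1$), a short algebraic manipulation reduces the defining integral to
\[
 h_V(t)=4\sqrt{2}\int_0^{\widetilde V}\cos(ut)\sqrt{\cosh\widetilde V-\cosh u}\;du.
\]
This is the same expression underlying \cite[Lemma~2.4]{chamizo}, and every subsequent estimate is really an estimate of this integral.

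For $t$ real and $V\ge 1$ with $t\le 1$, the trivial bound $\sqrt{\cosh\widetilde V-\cosh u}\le\sqrt{2V}$ together with the length $\widetilde V\asymp\log(2V)$ of the range gives $h_V(t)\ll V^{1/2}\log(2V)$, which is the claimed estimate. For $V\le 1$ (still $t$ real), I would Taylor expand $\cosh\widetilde V-\cosh u=(\widetilde V^2-u^2)/2+O(\widetilde V^4)$ on $[0,\widetilde V]$ and recognize the leading integral via the classical identity
\[
 \int_0^{a}\cos(ut)\sqrt{a^2-u^2}\;du=\frac{\pi a}{2t}J_1(at).
\]
The bound $J_1(x)\ll\min(x,x^{-1/2})$ then produces $h_V(t)\ll\min(V,V^{1/4}t^{-3/2})$, which is equivalent to $V(1+t^2V)^{-3/4}$.

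The one genuinely delicate step is the decay $V^{1/2}(1+t)^{-3/2}$ for $V\ge 1$ and $t\ge 1$. Here the integrand has a one-sided square-root singularity at $u=\widetilde V$: expanding $\cosh\widetilde V-\cosh u=\sinh\widetilde V\cdot(\widetilde V-u)+O((\widetilde V-u)^2)$ and setting $s=\widetilde V-u$, the integral becomes an oscillatory integral of the form $e^{it\widetilde V}\sqrt{\sinh\widetilde V}\int_0^{\widetilde V}e^{-its}\sqrt{s}\,g(s)\,ds$ with $g$ smooth and bounded. This is a standard endpoint-type oscillatory integral: integration by parts once (absorbing the $\sqrt{s}$ derivative into $g$, which worsens the behavior near $s=0$ only up to an integrable singularity) yields $t^{-3/2}$ decay. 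Since $\sinh\widetilde V\asymp V$ for $V\ge 1$, this gives the desired $V^{1/2}(1+t)^{-3/2}$. I expect this asymptotic-analysis step to be the main technical obstacle, mainly because one must keep careful track of the constants near the endpoint; the cleanest way will likely be to quote or reprove the classical endpoint lemma for $\int_0^a e^{-its}s^{1/2}g(s)\,ds$.

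Finally, for pure imaginary $t=i\tau$ with $0<\tau\le 1/2$, one has $\cos(ut)=\cosh(\tau u)$, so the integrand is positive and crude bounds suffice. For $V\ge 1$, using $\sqrt{\cosh\widetilde V-\cosh u}\le\sqrt{2V}$ and $\int_0^{\widetilde V}\cosh(\tau u)\,du=\sinh(\tau\widetilde V)/\tau$ gives $h_V(i\tau)\ll V^{1/2}\sinh(\tau\widetilde V)/\tau$; then $\sinh(\tau\widetilde V)\ll e^{\tau\widetilde V}\ll V^{\tau}$ yields the $V^{1/2+|t|}/|t|$ bound, and the alternative $V^{1/2+|t|}\log(2V)$ bound comes from $\sinh(\tau\widetilde V)/\tau\le\widetilde V\cosh(\tau\widetilde V)\ll\log(2V)V^{\tau}$. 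For $V\le 1$, everything is bounded: $\widetilde V\ll V^{1/2}$, $\cosh(\tau u)=O(1)$, and $\sqrt{\cosh\widetilde V-\cosh u}\le\sqrt{2V}$, giving $h_V(i\tau)\ll V^{1/2}\cdot V^{1/2}=V$.
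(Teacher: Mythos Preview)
Your approach is correct and coincides with the paper's: the paper's proof is simply the one-line citation ``This is a consequence of Lemma~2.4 in \cite{chamizo}'', and your sketch---deriving the one-variable representation $h_V(t)=4\sqrt{2}\int_0^{\widetilde V}\cos(ut)\sqrt{\cosh\widetilde V-\cosh u}\,du$ and estimating it regime by regime---is precisely the content of that cited result. The only point to tighten is the phrase ``integration by parts once yields $t^{-3/2}$'': after one integration by parts the remaining integral carries an $s^{-1/2}$ singularity and a Fresnel-type bound (equivalently Erd\'elyi's endpoint lemma) is still needed to gain the extra $t^{-1/2}$, but you already flag this as the step where one should quote the classical endpoint lemma, so the plan is sound.
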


\begin{proof}
 This is a consequence of Lemma~2.4 in \cite{chamizo}. 
\end{proof}

{ The logarithm appearing in these estimates cannot be avoided when $V$ is large and $t$ close to $0$. It is reflected on some average results for the hyperbolic circle problem  \cite{PhRu}.}

\

Now, we are ready to prove our main results.

\begin{proof}[of Theorem~\ref{mainth1}]
 Take in Proposition~\ref{NDker} $v_1=v_2=(V_1V_2)^{-1/2}$, hence we have
 $V_1'=V_1+O\big(V_1^{3/4}V_2^{-1/4}\big)$
 and
 $V_2'=V_2+O\big(V_2^{3/4}V_1^{-1/4}\big)$.
 With the notation of Lemma~\ref{sp_th},  for $t\in I_n$, \eqref{t_real} gives
 \[
  (4\pi v_j)^{-1}
  \big|h_{V_j'}(t)h_{v_j}(t)\big|
  \ll
  \frac{V_j^{1/2}}{(1+t)^{3/2}(1+t^2(V_1V_2)^{-1/2})^{3/4}}
 \]
 and a term $\log(V_j+1)$ must be introduced for $t\in I_0$. 
 Then $H_j\ll V_j^{1/2}(V_1V_2)^{1/8}$. On the other hand, the $1/9$ bound for $\Im t_{\ell j}$ from \cite{KiSh} implies by \eqref{t_pure}
 \[
  (4\pi v_j)^{-1}
  \big|h_{V_j'}(t_{\ell j})h_{v_j}(t_{\ell j})\big|
  \ll
  V_j^{1/2+1/9}=V_j^{11/18}.
 \]
 Consequently, the sum over $\ell \in \Lambda_0$ in Lemma~\ref{sp_th} contributes
 $O\big((V_1V_2)^{11/18}\big)$ and, as 
 $h_{V_j'}(i/2)=4\pi V_j'$
 and
 $h_{v_j}(i/2)=4\pi v_j$, we obtain the main term \eqref{eq:M} of the automorphic kernel
 \[
  \mathcal{M}
  =
  \frac{16\pi^2 V_1'V_2'}{|\Gamma\backslash\mathbb{H}^2|}
  +
  O\big((V_1V_2)^{11/18}\big)
  =
  \frac{1}{2}C_DV_1V_2
  +
  O\big((V_1V_2)^{3/4}\big)
 \]
 where we have used 
 expression \eqref{def_N} and Proposition~\ref{pvolume}. 
 On the other hand, the error term in Lemma~\ref{sp_th} is
 \[
  O\big(
  V_1^{1/2}(V_1V_2)^{1/8}
  \cdot
  V_2^{1/2}(V_1V_2)^{1/8}
  +
  V_1^{1/2}(V_1V_2)^{1/8}
  \cdot
  V_2^{11/18}
  +
  V_2^{1/2}(V_1V_2)^{1/8}
  \cdot
  V_1^{11/18}
  \big)
 \]
 and clearly this is $O\big((V_1V_2)^{3/4}\big)$.
\end{proof}

\begin{proof}[of Theorem~\ref{mainth2}]
 Take in Proposition~\ref{NDker} $v_1=(V_1V_2^2)^{-1/2}$ and  $v_2=V_2^{-1/2}$, that implies 
 $V_1'=V_1+O\big(V_1^{1/2}V_2^{-1}\big)$
 and
 $V_2'=V_2+O\big(V_1^{-1/4}V_2^{1/2}\big)$.
 The Selberg transforms of $h_{V_1'}$, $h_{v_1}$ and $h_{v_2}$ admit bounds as in the previous proof that give
 \[
  (4\pi v_1)^{-1}
  \big|h_{V_1'}(t)h_{v_1}(t)\big|
  \ll
  \frac{V_1^{1/2}}{(1+t)^{3/2}(1+t^2(V_1V_2^2)^{-1/2})^{3/4}}
  \qquad\text{for }t\in I_n,\ n\ge 1
 \]
 and  $H_1\ll V_1^{1/2}(V_1V_2^2)^{1/8}=V_1^{5/8}V_2^{1/4}$. 
 
 The difference is that for $h_{V_2'}$ we have to use 
 the second bound in \eqref{t_real} because $V_2'<1$. Then in this case
 \[
  (4\pi v_2)^{-1}
  \big|h_{V_2'}(t)h_{v_2}(t)\big|
  \ll
  \frac{V_2}{(1+t^2V_2)^{3/4}(1+t^2V_1^{-1/2})^{3/4}}
  \qquad\text{for }t\in I_n,\ n\ge 0
 \]
 which leads to $H_2\ll V_1^{1/8}V_2^{1/4}$. 
 
 If there are no exceptional eigenvalues, the error term in Lemma~\ref{sp_th} is simply $O(H_1H_2)=O\big( V_1^{3/4}V_2^{1/2} \big)$ and we get the first part of the result. 

 For the second part, note that we only need a bound for  
 $\Im t_{\ell_ j}$ when $j=1$ because the second formula of \eqref{t_pure} does not involve $t$. Under our assumption 
 \[
  (4\pi v_1)^{-1}
  \big|h_{V_1'}(t_{\ell_ 1})h_{v_1}(t_{\ell_ 1})\big|
  \ll
  V_1^{1/2+c}
  \qquad\text{and}\qquad
  (4\pi v_2)^{-1}
  \big|h_{V_2'}(t_{\ell_ 2})h_{v_2}(t_{\ell_ 2})\big|
  \ll
  V_2.
 \]
 Then the error term in Lemma~\ref{sp_th} includes two new terms following $V_1^{3/4}V_2^{1/2}$, namely, it is
 \[
  O\big(
  V_1^{3/4}V_2^{1/2}
  +
  V_1^{5/8}V_2^{5/4}
  +
  V_1^{5/8+c}V_2^{1/4}
  \big)
  =
  O\big(
  V_1^{3/4}V_2^{1/2}
  +
  V_1^{5/8+c}V_2^{1/4}
  \big).
 \]
 Finally, note that the finite sum in $\mathcal{M}$ contributes $O\big(V_1^{1/2+c}V_2\big)$ that is absorbed by the error term.
\end{proof}

\section{ Appendix. { Numerical considerations}}\label{num_res}

A problem to carry out numerical calculations related to \eqref{N_k} is that $r$ is essentially a divisor function in $\mathcal{O}[i]$ and it seems hard to implement in an efficient way. Here we make some comments for the reader interested in doing direct computations and we display some of the data obtained in this way. 

A simple calculation, already displayed in the proof of Lemma~\ref{conver} shows that for $\lambda= x+y\sqrt{D}\in\mathcal{O}$ the value of $r(\lambda)$ is the  number of solutions $a,b,c,d$ of
\begin{equation}\label{r2}
\begin{cases}
 x = a^2+Db^2+c^2+Dd^2,
 \\
 y = 2ab+2cd.
\end{cases}
\end{equation}
Here, $a+b\sqrt{D}, c+d\sqrt{D}\in\mathcal{O}$. Equivalently, we consider $a,b,c,d\in\Z$ if $D\equiv 1\ (4)$ and we consider  $2a,2b,2c,2d,a-b, c-d\in\Z$ if $D\not\equiv 1\ (4)$.

\medskip

The solutions of \eqref{r2} enjoy some symmetries that are useful for numerical calculations. 

\begin{proposition}\label{pr_symm}
 Given $\lambda =  x+y\sqrt{D}\in\mathcal{O}$, let $M_j$ and $M^*_k$ be the number of solutions $a+b\sqrt{D}, c+d\sqrt{D}\in\mathcal{O}$ of \eqref{r2} satisfying  the mutually exclusive conditions indicated in this list:
 \begin{align*}
  M_1\to&\,0<c<a;           &  M_2\to&\,0=c<a,\ d>0;     &  M_3\to&\,0<c=a,\ d<b;\\
  M_4\to&\,0=c=a,\ 0<d<b;           &  M_1^*\to&\,a=b=0;     &  M_2^*\to&\,a=c,\ b=d.
 \end{align*}
 Then $r(\lambda)=8(M_1+M_2+M_3+M_4)+2(M_1^*+M_2^*)$.
\end{proposition}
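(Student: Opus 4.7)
The proof plan is to organize the solutions of \eqref{r2} into orbits under a natural symmetry group~$G$ of order~$8$ and then identify the conditions defining $M_1,M_2,M_3,M_4,M_1^*,M_2^*$ as selection rules for orbit representatives. First I would identify a quadruple $(a,b,c,d)$ with the pair $(\xi,\eta)=(a+b\sqrt{D},c+d\sqrt{D})\in\mathcal{O}^2$ satisfying $\xi^2+\eta^2=\lambda$, so that $r(\lambda)$ is the cardinality of the solution set~$S$. On $S$ the group $G$ acts via the three involutions $g_1:(\xi,\eta)\mapsto(-\xi,\eta)$, $g_2:(\xi,\eta)\mapsto(\xi,-\eta)$ and $g_3:(\xi,\eta)\mapsto(\eta,\xi)$, which in coordinates read $(a,b,c,d)\mapsto(-a,-b,c,d)$, $(a,b,-c,-d)$ and $(c,d,a,b)$ respectively.

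Next I would classify the $G$-orbits on $S$. For $\lambda\neq 0$ one cannot have $\xi=\eta=0$ (a real field contains no $\sqrt{-1}$), and a stabilizer computation produces exactly three orbit types. \emph{Generic} orbits, with $\xi\eta\neq 0$ and $\xi\neq\pm\eta$, have trivial stabilizer and size~$8$. \emph{Type I} orbits, characterized by some element having $\xi=0$, have stabilizer $\{1,g_1\}$ and size~$4$; the two elements of the form $(0,\pm\eta)$ are precisely those with $a=b=0$. \emph{Type II} orbits, characterized by some element with $\xi=\eta\neq 0$, have stabilizer $\{1,g_3\}$ and size~$4$; the two elements of the form $\pm(\xi,\xi)$ are those with $a=c,\,b=d$. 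Note that the companion condition $\xi=-\eta$ does not give a new type, since $(\xi,-\xi)$ lies in the same orbit as $(\xi,\xi)$ via~$g_2$.

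The key step is to verify that $M_1\cup M_2\cup M_3\cup M_4$ forms a system of representatives for the generic orbits. Starting from any generic $(a,b,c,d)$, I apply $g_1$ and $g_2$ to enforce $a\geq 0$ and $c\geq 0$, and then $g_3$ to enforce $a\geq c$. If $a>c>0$ all sign choices are pinned and the representative lies in $M_1$. If $a>0=c$, the residual symmetry~$g_2$ flips only~$d$ (since $c=0$), so imposing $d>0$ places us in $M_2$. If $a=c>0$, the residual~$g_3$ now acts by transposing $(b,d)$, and genericity ($\xi\neq\eta$, hence $b\neq d$) lets us impose $b>d$, yielding $M_3$. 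If $a=c=0$, the residuals $g_1,g_2$ force $b,d>0$ while $g_3$ imposes $b>d$, giving~$M_4$. Mutual exclusivity is immediate from the inequalities. Writing $N_G,N_I,N_{II}$ for the numbers of orbits of each type, the orbit-size tally yields
\[
r(\lambda)=8N_G+4N_I+4N_{II},\quad M_1+M_2+M_3+M_4=N_G,\quad M_1^*=2N_I,\quad M_2^*=2N_{II},
\]
and substitution produces the stated formula. The only delicate part is the generic-orbit bookkeeping, but it is routine once the group action is in place; the Type I and Type II counts follow from a direct orbit computation.
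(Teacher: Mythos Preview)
Your proof is correct and follows essentially the same route as the paper: both arguments exploit the order-$8$ symmetry group generated by $(\xi,\eta)\mapsto(-\xi,\eta)$, $(\xi,\eta)\mapsto(\xi,-\eta)$, $(\xi,\eta)\mapsto(\eta,\xi)$, separate the free orbits (size~$8$, represented by $M_1\cup M_2\cup M_3\cup M_4$) from the non-free locus (contributing $2M_1^*+2M_2^*$), and assemble the count. Your orbit-by-orbit stabilizer bookkeeping is a bit more explicit than the paper's, but the substance is identical.
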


\begin{proof}
 We have a group of transformations acting on the solutions, $G\cong\Z_2\times \Z_2\times \Z_2$,  generated by
 \[
 (a,b,c,d)\longmapsto (-a,-b,c,d),
 \quad
 (a,b,c,d)\longmapsto (a,b,-c,-d),
 \quad
 (a,b,c,d)\longmapsto (c,d,a,b).
 \]
 This action is free (fixed point free) except for the set of solutions $S^*$ satisfying one of the following four sets of conditions:
 \[
  a=b=0
  \quad\text{or}\quad
  c=d=0
  \quad\text{or}\quad
  a=c,\ b=d
  \quad\text{or}\quad
  a=-c,\ b=-d.
 \]
 The solutions satisfying the first or the second set of conditions amount $2M_1^*$ and the rest of the conditions give  $2M_2^*$ solutions. In the complement of $S^*$ the action of $G$ gives equivalence classes with eight elements and we can always select exactly one of these eight elements satisfying the conditions indicated for $M_1$, $M_2$, $M_3$ and~$M_4$. This follows imposing an ordering between $0$, $a$ and $c$ when possible i.e., when there are not coincidences among them. In this way $M_1$ counts the ordered case and $M_2$, $M_3$ and $M_4$ the cases with coincidences. 
\end{proof}

For the numerical calculation of $N_D(V_1,V_2)$ note that 
$0\le \lambda<V_1$ 
and 
$0\le \lambda^c<V_2$ 
is equivalent to
\[
 0\le 2x< {V_1+V_2},
 \qquad
 {\max(-x,x-V_2)}
 <y \sqrt{D}<
 {\min(V_1-x,x)}.
\]
The first one readily imposes upper bounds on $|a|$, $|b|$, $|c|$ and $|d|$.
If we let $a$, $b$, $c$ and $d$  vary in these ranges and, for each combination, we add~1 to the position $x$, $y$ of a matrix with $x$ and $y$ as in \eqref{r2}, then this matrix will store the required values of $r(\lambda)$ and the evaluation of $r(\lambda)r(\lambda+1)$ in \eqref{N_k} corresponds to a sum of  products of adjacent entries. 

Now Proposition~\ref{pr_symm} allows to reduce in a factor of approximately $1/8$ the ranges of $a$, $b$, $c$ and $d$ to be considered. In the implementation the loop corresponding to $M_1$ takes the bulk of the calculations because the rest of the loops involve a lesser number of free variables.

The actual issue for direct computations of this kind is the limit of the fast access allocatable memory to store the matrix. The size of the stored elements can be reduced to one fourth noting that $r(\lambda)=r(\lambda^\sigma)$ and that $y$ is always even by \eqref{r2}. With these reductions, when $D\not\equiv 1\ (4)$ in the more demanding case $V_1\approx V_2\approx V$ (less sparse matrix), the memory to be allocated for the values of $x$ and $y$  
is of around
\[
 \sum_{x=0}^V
 \frac{\min(V-x,x)}{2\sqrt{D}}
 \sim \frac{V^2}{8\sqrt{D}}
\]
integers. 
When $V_1$ and $V_2$ are very unbalanced, meaning $V_1$ large and $V_2<1$, then $y$ is determined by $x$ and one only needs room for the around $V_1/2$ integer values of $x$. If $D\equiv 1\ (4)$ these figures should be multiplied by $2$. 

\

We mention here a couple of tables for $D=2$ that we have obtained by implementing the previous idea in an average PC with a simple C program. 

As in the case of the Gauss circle problem, the error term in the asymptotic approximation of $N_2(V,V)$ oscillates and to get a reliable idea on the limits for a general upper bound, rather than picking large special  values of $V$, it is more informative to consider 
\[
 F(x)=\sup_{V<x}|N_2(V,V)-8V^2|.
\]
Extensive computations prove 

\medskip

\begin{center}
\bgroup
\def\arraystretch{0.5}
\setlength\tabcolsep{4pt}
\begin{tabular}{r|c|c|c|c|c|c|c|c|}
\hline
 $x$& 5000& 10000& 15000& 20000& 25000& 30000& 35000& 40000
 \\
 \hline
 $F(x)$& 124508& 383780& 421116& 440700& 882044& 1082980& 1369084& 1807652
 \\
 \hline
\end{tabular}
\egroup
\end{center}

\

As an aside, a possibility that we have not explored is to carry out numerical calculations via Lemma~\ref{alemma} with a description of the group in terms of words and relations (cf. \cite{PhRu}). 
It looks promising when $4\mid D-1$ because $\Gamma_\mathcal{O}$ has simple generators and Proposition~\ref{cosets} gives a full description of the cosets
but when $4\nmid D-1$ the natural generators are linked to the class group \cite[{\rm\S}5.2]{everhart} and the calculations for large values of $D$ could be more difficult.
In both cases one should have some control on $u(\gamma (\mathbf{i}),\mathbf{i})$ in terms of the length of $\gamma$.

\medskip 

In the case  of unbalanced arguments of $N_2$, the numerical experiments suggest that the asymptotic formula 
\[
 N_2(V_1,V_2)\sim 8V_1V_2
\]
in Theorem~\ref{mainth2}
could hold in some ranges beyond the condition   $V_1V_2^2\to\infty$, but even for reasonably large $V_1$ the values of $N_2(V_1,V_2)$ are too small to draw trustworthy conjectures. 
For instance, if we define
\[
 G(V)=\frac 18 V^{-1/2} {N_2(V,V^{-1/2})}
\]
then we have

\medskip

\begin{center}
\bgroup
\def\arraystretch{0.5}
\setlength\tabcolsep{4pt}
\begin{tabular}{r|c|c|c|c|c|}
\hline
 $V$& 10000& 20000& 30000& 40000& 50000
 \\
 \hline
 $N_2(V,V^{-1/2})$& 836& 1220& 1476& 1540 &1924
 \\
 \hline
 $G(V)$& 1.045000&1.078337&1.065211& 0.962500& 1.075548
 \\
 \hline
\end{tabular}
\egroup
\end{center}

\medskip

It is tempting to claim that $G(V)$ goes to $1$ when $V\to \infty$.

\

{We finish with some considerations about the  constant $C_D$ giving the limit of $N_D(V,V)/V^2$}. Here it is a table of some values 

\medskip

\begin{center}
\bgroup
\def\arraystretch{0.5}
\setlength\tabcolsep{4pt}
\begin{tabular}{r|c|c|c|c|c|c|c|c|c|c|}
\hline
 $D$& 2& 3& 5& 6& 7& 101& 1001& 10001& 100001& 1000001
 \\
 \hline
 $C_D$& 8& 4& 8& 4/3& 1& 8/95& 2/753& 1/11616& 4/1462371& 1/11832936
 \\
 \hline
\end{tabular}
\egroup
\end{center}

\medskip

{ As we mentioned before, and the table confirms,  $C_D$   decays as $D^{-3/2}$.}
This decay can be proved theoretically with explicit constants. Namely, we have

\begin{proposition}
 With the notation as in \eqref{def_N}, we have 
 \[
        {\frac{192}{5\Delta^{3/2}}
        <C_D<\frac{240}{\Delta^{3/2}}.}
  \]
\end{proposition}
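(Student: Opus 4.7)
The plan is to translate the arithmetic expression for $C_D$ into a closed form involving $L(2,\chi)$, and then to bound the resulting two elementary factors separately.

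First I would combine Proposition~\ref{pvolume} with Siegel's volume formula \eqref{siegel_v} and Corollary~\ref{c_index}, which gives $[\Gamma_\mathcal{O}:\Gamma]=3\bigl(2-\chi(2)+2\chi(4)\bigr)$. Multiplying the index by \eqref{siegel_v} and equating with the expression of Proposition~\ref{pvolume}, one obtains the useful identity
\[
 \sum_{n=1}^\Delta n^2\chi(n)
 =
 \frac{6}{\pi^4}\,\Delta^{5/2}\zeta_D(2),
\]
so that substituting into \eqref{def_N} and using $\zeta_D(2)=\zeta(2)L(2,\chi)=\frac{\pi^2}{6}L(2,\chi)$ one arrives at the compact formula
\[
 C_D
 =
 \frac{32\pi^2}{\bigl(2-\chi(2)+2\chi(4)\bigr)\,\Delta^{3/2}\,L(2,\chi)}.
\]

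Next I would bound the two factors depending on $\chi$ in the denominator. Since $\chi$ is real and $\chi(4)=\chi(2)^2$, the combination $2-\chi(2)+2\chi(4)$ takes only the three values $2$, $3$, $5$ according as $\chi(2)=0,1,-1$; in particular $2\le 2-\chi(2)+2\chi(4)\le 5$. For $L(2,\chi)$ I would use the Euler product $L(2,\chi)=\prod_p(1-\chi(p)/p^2)^{-1}$ together with the extremal comparisons
\[
 \prod_p\bigl(1+p^{-2}\bigr)^{-1}
 \le
 L(2,\chi)
 \le
 \prod_p\bigl(1-p^{-2}\bigr)^{-1},
\]
i.e. $\zeta(4)/\zeta(2)\le L(2,\chi)\le \zeta(2)$, which evaluates to $\pi^2/15\le L(2,\chi)\le \pi^2/6$.

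Finally I would combine these two pairs of bounds in the displayed formula for $C_D$. The largest possible denominator, $5\cdot \pi^2/6\cdot \Delta^{3/2}$, yields the lower bound $C_D\ge 32\pi^2/(5\pi^2/6\,\Delta^{3/2})=192/(5\Delta^{3/2})$, while the smallest possible denominator, $2\cdot \pi^2/15\cdot \Delta^{3/2}$, yields the upper bound $C_D\le 32\pi^2/(2\pi^2/15\,\Delta^{3/2})=240/\Delta^{3/2}$; strictness follows from the fact that the Euler factor inequalities are never attained simultaneously by a non-principal real character. No step here should present any serious obstacle; the only thing one has to be careful with is the bookkeeping in passing from $C_D$ to $L(2,\chi)$ through Siegel's formula and the functional equation already used in the proof of Proposition~\ref{pvolume}, which guarantees that every step is reversible.
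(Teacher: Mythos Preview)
Your proof is correct and follows essentially the same approach as the paper: both arguments reduce $C_D$ to an expression involving $L(2,\chi)$ via Siegel's volume formula and Corollary~\ref{c_index}, then use the Euler-product comparison $\zeta(4)/\zeta(2)<L(2,\chi)<\zeta(2)$ together with the elementary observation that $2-\chi(2)+2\chi(4)\in\{2,3,5\}$. The only cosmetic difference is ordering: the paper first translates the $L$-bounds into $\frac{1}{15}\Delta^{5/2}<\sum_{n=1}^{\Delta}n^2\chi(n)<\frac{1}{6}\Delta^{5/2}$ and then substitutes into \eqref{def_N}, whereas you pass directly to the closed form $C_D=32\pi^2\big/\big((2-\chi(2)+2\chi(4))\Delta^{3/2}L(2,\chi)\big)$ (which the paper in fact records immediately after its proof) and bound from there.
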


{This is in accordance with the previous table. In which the minimal value of $\Delta^{3/2}C_D$ is around $84.12$ (reached at $D=1001$) and the maximal value is around $181.02$ (reached at $D=2$).}

\begin{proof}
 The formula \eqref{siegel_v} due to Siegel for the volume of the fundamental region of the full Hilbert modular group and Corollary~\ref{c_index} imply
 \[
  |\Gamma\backslash\mathbb{H}^2|
  =
  \big(2-\chi(2)+2\chi(4)\big)
  \frac{6}{\pi^2}\Delta^{3/2}\zeta(2)L(2,\chi).
 \]
 Clearly
 \[
  \frac{\zeta(4)}{\zeta(2)}
  =
  \prod_p\big(1+p^{-2}\big)^{-1}
  <L(2,\chi)<
  \prod_p\big(1-p^{-2}\big)^{-1}
  =\zeta(2).
 \]
 Using $\zeta(2)=\pi^2/6$, $\zeta(4)=\pi^4/90$ and comparing with Proposition~\ref{pvolume}, 
 \[
  \frac{1}{15}\Delta^{5/2}
  <
  \sum_{n=1}^\Delta
  n^2\chi(n)
  <
  \frac{1}{6}\Delta^{5/2}.
 \]
 Substituting in the definition of $C_D$, we complete the proof. 
\end{proof}

Sharper bounds can be proved separating congruence classes modulo~8. For instance, if $8\mid D-1$ the previous argument leads to
\[
 \Delta^{3/2}C_D=\frac{32\pi^2}{3L(2,\chi)}
 >\frac{32\pi^2}{3\zeta(2)}=64
\]
and in fact $\Delta^{3/2}C_D\to 64$ if we choose $D=4\prod_{p\le N}p+1$ with $N\to\infty$ to force $\chi(p)=1$ for small primes. For example, taking $N=17$ we have  $D= 38798761$ and $\Delta^{3/2}C_D\approx 64.84$.


\end{document}